\numberwithin{equation}{section}
\newtheorem{thm}[equation]{Theorem}
\newtheorem*{thm*}{Theorem}
\newtheorem*{prop*}{Proposition}
\newtheorem{prop}[equation]{Proposition}
\newtheorem{lem}[equation]{Lemma}
\newtheorem{cor}[equation]{Corollary}
\theoremstyle{remark}
\newtheorem{defn}[equation]{Definition}
\newtheorem{ex}[equation]{Example}
\newtheorem{rem}[equation]{Remark}
\newtheorem{rec}[equation]{Recollection}
\newtheorem*{conv*}{Convention}
\newtheorem*{Ack}{Acknowledgements}
\newcommand{\nc}{\newcommand}
\nc{\dmo}{\DeclareMathOperator}
\nc{\sfA}{\mathsf{A}}
\nc{\sfB}{\mathsf{B}}
\nc{\sfC}{\mathsf{C}}
\nc{\sfD}{\mathsf{D}}
\nc{\sfE}{\mathsf{E}}
\nc{\sfF}{\mathsf{F}}
\nc{\sfG}{\mathsf{G}}
\nc{\sfH}{\mathsf{H}}
\nc{\sfI}{\mathsf{I}}
\nc{\sfJ}{\mathsf{J}}
\nc{\sfK}{\mathsf{K}}
\nc{\sfL}{\mathsf{L}}
\nc{\sfM}{\mathsf{M}}
\nc{\sfN}{\mathsf{N}}
\nc{\sfO}{\mathsf{O}}
\nc{\sfP}{\mathsf{P}}
\nc{\sfQ}{\mathsf{Q}}
\nc{\sfR}{\mathsf{R}}
\nc{\sfS}{\mathsf{S}}
\nc{\sfT}{\mathsf{T}}
\nc{\sfU}{\mathsf{U}}
\nc{\sfV}{\mathsf{V}}
\nc{\sfW}{\mathsf{W}}
\nc{\sfX}{\mathsf{X}}
\nc{\sfY}{\mathsf{Y}}
\nc{\sfZ}{\mathsf{Z}}
\nc{\scA}{\mathscr{A}}
\nc{\scB}{\mathscr{B}}
\nc{\scC}{\mathscr{C}}
\nc{\scD}{\mathscr{D}}
\nc{\scE}{\mathscr{E}}
\nc{\scF}{\mathscr{F}}
\nc{\scG}{\mathscr{G}}
\nc{\scH}{\mathscr{H}}
\nc{\scI}{\mathscr{I}}
\nc{\scJ}{\mathscr{J}}
\nc{\scK}{\mathscr{K}}
\nc{\scL}{\mathscr{L}}
\nc{\scM}{\mathscr{M}}
\nc{\scN}{\mathscr{N}}
\nc{\scO}{\mathscr{O}}
\nc{\scP}{\mathscr{P}}
\nc{\scQ}{\mathscr{Q}}
\nc{\scR}{\mathscr{R}}
\nc{\scS}{\mathscr{S}}
\nc{\scT}{\mathscr{T}}
\nc{\scU}{\mathscr{U}}
\nc{\scV}{\mathscr{V}}
\nc{\scW}{\mathscr{W}}
\nc{\scX}{\mathscr{X}}
\nc{\scY}{\mathscr{Y}}
\nc{\scZ}{\mathscr{Z}}
\nc{\mcA}{\mathcal{A}}
\nc{\mcB}{\mathcal{B}}
\nc{\mcC}{\mathcal{C}}
\nc{\mcD}{\mathcal{D}}
\nc{\mcE}{\mathcal{E}}
\nc{\mcF}{\mathcal{F}}
\nc{\mcG}{\mathcal{G}}
\nc{\mcH}{\mathcal{H}}
\nc{\mcI}{\mathcal{I}}
\nc{\mcJ}{\mathcal{J}}
\nc{\mcK}{\mathcal{K}}
\nc{\mcL}{\mathcal{L}}
\nc{\mcM}{\mathcal{M}}
\nc{\mcN}{\mathcal{N}}
\nc{\mcO}{\mathcal{O}}
\nc{\mcP}{\mathcal{P}}
\nc{\mcQ}{\mathcal{Q}}
\nc{\mcR}{\mathcal{R}}
\nc{\mcS}{\mathcal{S}}
\nc{\mcT}{\mathcal{T}}
\nc{\mcU}{\mathcal{U}}
\nc{\mcV}{\mathcal{V}}
\nc{\mcW}{\mathcal{W}}
\nc{\mcX}{\mathcal{X}}
\nc{\mcY}{\mathcal{Y}}
\nc{\mcZ}{\mathcal{Z}}
\nc{\mfp}{\mathfrak{p}}
\nc{\mfq}{\mathfrak{q}}
\nc{\mfm}{\mathfrak{m}}
\nc{\mfj}{\mathfrak{j}}
\nc{\mfs}{\mathfrak{s}}
\nc{\mfh}{\mathfrak{h}}
\nc{\rmh}{\mathrm{h}}
\nc{\rmfp}{\mathrm{fp}}
\nc{\rmc}{\mathrm{c}}
\nc{\rms}{\mathrm{s}}
\nc{\rma}{\mathrm{a}}
\nc{\rmb}{\mathrm{b}}
\nc{\rml}{\mathrm{l}}
\nc{\rmL}{\mathrm{L}}
\nc{\rmC}{\mathrm{C}}
\nc{\rmD}{\mathrm{D}}
\nc{\rmop}{\mathrm{op}}
\nc{\rmS}{\mathrm{S}}
\nc{\rmK}{\mathrm{K}}
\nc{\rmN}{\mathrm{N}}
\nc{\rmH}{\mathrm{H}}
\nc{\rmZ}{\mathrm{Z}}
\nc{\bfA}{\mathbf{A}}
\nc{\bfB}{\mathbf{B}}
\nc{\bfC}{\mathbf{C}}
\nc{\bfD}{\mathbf{D}}
\nc{\bfE}{\mathbf{E}}
\nc{\bfF}{\mathbf{F}}
\nc{\bfG}{\mathbf{G}}
\nc{\bfH}{\mathbf{H}}
\nc{\bfI}{\mathbf{I}}
\nc{\bfJ}{\mathbf{J}}
\nc{\bfK}{\mathbf{K}}
\nc{\bfL}{\mathbf{L}}
\nc{\bfM}{\mathbf{M}}
\nc{\bfN}{\mathbf{N}}
\nc{\bfO}{\mathbf{O}}
\nc{\bfP}{\mathbf{P}}
\nc{\bfQ}{\mathbf{Q}}
\nc{\bfR}{\mathbf{R}}
\nc{\bfS}{\mathbf{S}}
\nc{\bfT}{\mathbf{T}}
\nc{\bfU}{\mathbf{U}}
\nc{\bfV}{\mathbf{V}}
\nc{\bfW}{\mathbf{W}}
\nc{\bfX}{\mathbf{X}}
\nc{\bfY}{\mathbf{Y}}
\nc{\bfZ}{\mathbf{Z}}
\nc{\bbN}{\mathbb{N}}
\nc{\bbZ}{\mathbb{Z}}
\nc{\bbQ}{\mathbb{Q}}
\nc{\gG}{\Gamma}
\nc{\gL}{\Lambda}
\nc{\gD}{\Delta}
\nc{\gS}{\Sigma}
\nc{\ga}{\alpha}
\nc{\gb}{\beta}
\nc{\g}{\gamma}
\nc{\gd}{\delta}
\nc{\e}{\epsilon}
\nc{\gz}{\zeta}
\nc{\gh}{\eta}
\nc{\gu}{\theta}
\nc{\gi}{\iota}
\nc{\gk}{\kappa}
\nc{\gl}{\lambda}
\nc{\gm}{\mu}
\nc{\gn}{\nu}
\nc{\gj}{\xi}
\nc{\gp}{\pi}
\nc{\gr}{\rho}
\nc{\gs}{\sigma}
\nc{\gt}{\tau}
\nc{\gf}{\phi}
\nc{\gx}{\chi}
\nc{\gc}{\psi}
\nc{\go}{\omega}
\nc{\wh}{\widehat}
\nc{\wt}{\widetilde}
\nc{\ol}{\overline}
\nc{\ul}{\underline}
\nc{\tl}{\tilde}
\nc{\ot}{\otimes}
\nc{\xr}{\xrightarrow}
\dmo{\Coloc}{Coloc}
\dmo{\coloc}{coloc}
\dmo{\Loc}{Loc}
\dmo{\loc}{loc}
\dmo{\gldim}{gldim}
\dmo{\Spec}{Spec}
\dmo{\Sing}{Sing}
\dmo{\Spc}{Spc}
\dmo{\Supp}{Supp}
\dmo{\Cosupp}{Cosupp}
\dmo{\Mod}{Mod}
\dmo{\smod}{mod}
\dmo{\sMod}{\ul{\Mod}}
\dmo{\smodu}{\ul{\smod}}
\dmo{\Proj}{Proj}
\dmo{\proj}{proj}
\dmo{\Flat}{Flat}
\dmo{\Inj}{Inj}
\dmo{\ginj}{GInj}
\dmo{\sginj}{\ul{\ginj}}
\dmo{\PInj}{PInj}
\dmo{\Coh}{Coh}
\dmo{\QCoh}{QCoh}
\dmo{\RHom}{RHom}
\dmo{\Hom}{Hom}
\dmo{\shom}{hom}
\dmo{\Ext}{Ext}
\dmo{\End}{End}
\dmo{\Ob}{Ob}
\dmo{\Ann}{Ann}
\dmo{\pd}{pd}
\dmo{\res}{res}
\dmo{\Ker}{Ker}
\dmo{\coker}{Coker}
\dmo{\im}{Im}
\dmo{\colim}{colim}
\dmo{\hocolim}{hocolim}
\dmo{\Id}{Id}
\patchcmd{\@setaddresses}{\indent}{\noindent}{}{}
\patchcmd{\@setaddresses}{\indent}{\noindent}{}{}
\patchcmd{\@setaddresses}{\indent}{\noindent}{}{}
\patchcmd{\@setaddresses}{\indent}{\noindent}{}{}
\begin{document}
\title{Colocalizing subcategories of singularity categories}
\author{Charalampos Verasdanis}
\address{Charalampos Verasdanis \\ School of Mathematics and Statistics \\ University of Glasgow}
\email{c.verasdanis.1@research.gla.ac.uk}
\date{}
\subjclass{18F99, 18G80, 13D09}
\keywords{Singularity category, colocalizing subcategory, costratification, cosupport, relative tensor-triangular geometry}

\begin{abstract}
Utilizing previously established results concerning costratification in relative tensor-triangular geometry, we classify the colocalizing subcategories of the singularity category of a locally hypersurface ring and then we generalize this classification to singularity categories of schemes with hypersurface singularities.
\end{abstract}

\maketitle

\section{Introduction}
\label{sec:intro}%
Let $R$ be a commutative noetherian ring. The \emph{singularity category} $\rmS(R)=\rmK_{\mathrm{ac}}(\Inj R)$, i.e., the homotopy category of acyclic complexes of injective $R$-modules, was introduced and studied in~\cite{Krause05} (wherein it was called the \emph{stable derived category}), where it was shown that $\rmS(R)$ is a compactly generated triangulated category and that there is a \emph{stabilization functor} $I_\gl Q_\rho \colon \rmD(R) \to \rmS(R)$ that can be used to describe the compact objects of $\rmS(R)$. A classic problem regarding such categories is the classification of the localizing and colocalizing subcategories, e.g., for the derived category $\rmD(R)$, see~\cite{Neeman92,Neeman11}. The modern approach is via the machinery of tensor-triangular geometry~\cite{Balmer05,BalmerFavi11} and the theory of stratification and costratification~\cite{BarthelCastellanaHeardSanders23,BarthelHeardSanders23,
BensonIyengarKrause11a,BensonIyengarKrause11b,BensonIyengarKrause12,Verasdanis23a,
Verasdanis23b}, provided that the given category is tensor-triangulated. However, $\rmS(R)$ is not tensor-triangulated (at least not in any obvious way) and so a different approach is adopted. In~\cite{Stevenson13}, Stevenson developed a theory of actions of rigidly-compactly generated tensor-triangulated categories on compactly generated triangulated categories and in~\cite{Stevenson14}, he utilized the action of $\rmD(R)$ on $\rmS(R)$ to classify the localizing subcategories of $\rmS(R)$, for a locally hypersurface ring $R$ and then generalized this classification result to the singularity category $\rmS(X)$ of a noetherian separated scheme $X$ with hypersurface singularities.

In this paper we apply the theory of costratification developed in~\cite{Verasdanis23a} in order to classify the colocalizing subcategories of $\rmS(R)$, for a locally hypersurface ring $R$ and then we generalize our result to the singularity category of a scheme with hypersurface singularities. Specifically, for the case of rings, using the action of $\rmD(R)$ on $\rmS(R)$, we obtain a notion of cosupport for the objects of $\rmS(R)$. The cosupport of an object $A\in \rmS(R)$ is
\[
\Cosupp(A)=\{\mfp \in \Spec R \mid \Hom_R(g_\mfp,A)\neq 0\}\subseteq \Sing R,
\]
where $g_\mfp=K_\infty(\mfp)\ot_R R_\mfp$ is the Balmer--Favi idempotent associated with $\mfp\in \Spec R$ and $\Sing R$ is the singular locus of $R$. The assignment of cosupport allows us to define the maps
\[
\begin{tikzcd}
\Coloc(\rmS(R)) \rar["\sigma",shift left] & \scP(\Sing R){,} \lar["\tau",shift left]
\end{tikzcd}
\]
where $\sigma(\scC)=\bigcup_{A\in \scC}\Cosupp(A)$ and $\tau(W)=\{A\in \rmS(R) \mid \Cosupp(A)\subseteq W\}$. If $\sigma$ and $\tau$ are bijections inverse to each other, then we say that $\rmS(R)$ is \emph{costratified}. If $X$ is a noetherian separated scheme and $\rmD(X)$ is the derived category of quasi-coherent sheaves on $X$, one can use the action of $\rmD(X)$ on $\rmS(X)$ in a similar fashion to obtain the notions of cosupport and costratification.
\begin{thm*}[\ref{thm:hyper-costrat}, \ref{thm:costrat-schemes}]
Let $R$ be a locally hypersurface ring. Then $\rmS(R)$ is costratified. Let $X$ be a noetherian separated scheme with hypersurface singularities. Then $\rmS(X)$ is costratified.
\end{thm*}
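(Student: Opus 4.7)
The plan is to invoke the general costratification criterion of~\cite{Verasdanis23a} applied to the action of $\rmD(R)$ on $\rmS(R)$ introduced in~\cite{Stevenson14}. That criterion reduces the theorem to two ingredients: the local-to-global principle for cosupport on $\rmS(R)$, and minimality of the colocalizing subcategory $\tau(\{\mfp\}) = \{A \in \rmS(R) \mid \Cosupp(A) \subseteq \{\mfp\}\}$ for every $\mfp \in \Sing R$.

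For the local-to-global principle, I would transport the known costratification of $\rmD(R)$ along the action using the standard identifications relating $\Hom_R(g_\mfp, A)$ to the $\mfp$-local-and-complete piece of $A$. Combined with Stevenson's stratification of $\rmS(R)$---which gives that $A = 0$ whenever $\gG_\mfp \gL_\mfp A = 0$ for all $\mfp \in \Sing R$---this yields the detection criterion $A=0 \iff \Hom_R(g_\mfp, A)=0$ for all $\mfp$, from which the local-to-global principle for cosupport on $\rmS(R)$ follows via the framework of~\cite{Verasdanis23a}.

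The main obstacle is minimality. Localizing at $\mfp \in \Sing R$ reduces us to the case of a hypersurface local ring $(R_\mfp, \mfp R_\mfp)$; Stevenson~\cite{Stevenson14} proved minimality of the corresponding \emph{localizing} subcategory of $\rmS(R_\mfp)$ concentrated at the closed point, which is exactly what he used to classify $\Loc(\rmS(R))$. Upgrading this to minimality of the \emph{colocalizing} subcategory is the heart of the argument. Here I would exploit the matrix-factorization description of $\rmS(R_\mfp)$ at $\mfp$, under which the relevant piece carries a tensor-triangular structure with a one-point Balmer spectrum; in that tt-setting, a Brown--Comenetz / pure-injective-cogenerator argument should turn Stevenson's $\Loc$-classification into the desired $\Coloc$-classification. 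This duality step---passing from a classification of localizing to colocalizing subcategories of the local singularity category---is the main technical difficulty.

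Finally, to pass from rings to schemes, I would use the Zariski-local nature of the costratification criterion in~\cite{Verasdanis23a}. Choosing an affine open cover $\{U_i = \Spec R_i\}$ of $X$ with each $R_i$ locally hypersurface, each $\rmS(U_i)$ is costratified by the affine case already proved, and the restriction functors $\rmS(X) \to \rmS(U_i)$ commute with the $\rmD$-actions and with cosupport. A descent argument, of the type developed in~\cite{Verasdanis23a}, then assembles these local costratifications into costratification of $\rmS(X)$.
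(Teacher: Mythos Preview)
Your overall scaffolding---reduce costratification to the colocal-to-global principle plus cominimality at each $\mfp\in\Sing R$, localize to a hypersurface local ring for the latter, and then globalize to schemes via Zariski descent from~\cite{Verasdanis23a}---matches the paper exactly. The divergence, and the gap, is entirely in how you propose to establish cominimality at the closed point of a local hypersurface.

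You suggest invoking a tensor-triangular structure on the closed-point piece of $\rmS(R_\mfp)$ via matrix factorizations, and then running a Brown--Comenetz/pure-injective duality to convert Stevenson's minimality of the \emph{localizing} subcategory into minimality of the \emph{colocalizing} one. This is not what the paper does, and as written it is not a proof: the paper explicitly notes that $\rmS(R)$ carries no obvious tt-structure, and no such structure on $[g_\mfm,\rmS(R_\mfp)]$ is produced or cited. Matrix-factorization categories do admit tensor products in special situations (graded, Thom--Sebastiani type), but there is no off-the-shelf rigidly-compactly generated tt-structure on the big singularity category of a general local hypersurface to which one could apply the ``stratification $\Rightarrow$ costratification'' machinery. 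Without that, the ``duality step'' you flag as the main difficulty remains a slogan rather than an argument.

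The paper's route is entirely different and avoids any tt-structure on $\rmS(R)$. For a local hypersurface $(R,\mfm,k)$ it proves $[g_\mfm,\rmS(R)]=\coloc(I_\gl Q_\rho(k))$ by first showing that the stabilized residue fields $I_\gl Q_\rho(k(\mfp))$ are \emph{endofinite}, hence pure-injective, so they perfectly cogenerate $\rmS(R)$; a direct computation of $\Hom_R(K_\infty(\mfm),I_\gl Q_\rho(k(\mfp)))$ then isolates $I_\gl Q_\rho(k)$ as the sole cogenerator of the closed-point piece. Minimality is then proved by \emph{induction on Krull dimension}: the base case is an artinian hypersurface, where $\rmS(R)\simeq\sMod R$ is pure-semisimple and has no nontrivial (co)localizing subcategories; for the inductive step one chooses a regular element $x\in\mfm$, passes to $R/(x)$ (still a hypersurface, of smaller dimension), and uses the adjunction $R/(x)\ot_R-\dashv\res$ together with the inductive hypothesis to force $I_\gl Q_\rho(k)\in\coloc(G)$ for any nonzero $G\in[g_\mfm,\rmS(R)]$. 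None of this uses matrix factorizations or any internal tensor on $\rmS(R)$; the key new inputs beyond Stevenson are the endofiniteness/pure-injectivity of $I_\gl Q_\rho(k(\mfp))$ and the dimension-reduction argument.
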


The paper is organized as follows: In~\Cref{sec:singularity}, we recall some basic facts about the tensor-triangular geometry of the derived category of a ring and its action on the singularity category. Then we study the relative internal-hom functor on the singularity category. In~\Cref{sec:gorenstein}, we present a few notions concerning Gorenstein rings and Gorenstein-injective modules, with the key point being that, over a Gorenstein ring, the stable category of Gorenstein-injective modules is equivalent to the singularity category. In~\Cref{sec:endofinite}, using the concept of endofiniteness, we prove that the images of the residue fields under the stabilization functor are pure-injective and in~\Cref{sec:cogen} we obtain cogenerators for certain subcategories of the singularity category. Finally, in~\Cref{sec:main}, we prove~\Cref{thm:hyper-costrat} and in~\Cref{sec:schemes}, we prove~\Cref{thm:costrat-schemes}.
\begin{Ack}
I thank Greg Stevenson for all the helpful discussions, Jan \v{S}t'ov\'{\i}\v{c}ek for bringing to my attention a useful reference and Paul Balmer and Josh Pollitz for their interest and suggestions.
\end{Ack}
\section{Singularity categories}
\label{sec:singularity}%
\begin{conv*}
Throughout, $R$ will always denote a commutative noetherian ring.
\end{conv*}

Let $f$ be an element of $R$. The \emph{stable Koszul complex} associated with $f$ is $K_\infty(f)=R\to R_f$, where $R$ sits in degree $0$ and $R_f$, the localization of $R$ at~$f$, sits in degree $1$ and the map $R\to R_f$ is the localization homomorphism. Let $I=(f_1,\ldots,f_n)$ be an ideal of $R$ and set $K_\infty(I)\coloneqq K_\infty(f_1)\ot_R \cdots \ot_R K_\infty(f_n)$. Up to quasi-isomorphism, $K_\infty(I)$ does not depend on the choice of generators for the ideal $I$. Since $K_\infty(I)$ is a bounded complex of flat $R$-modules, $K_\infty(I)$ is $\rmK$-\emph{flat}, which means that tensoring with $K_\infty(I)$ preserves quasi-isomorphisms.
\begin{lem}\label{lem:koszul-quotient}%
Let $I,J$ be ideals of $R$ with $J\subseteq I$. Then $K_\infty(I/J)\cong R/J\ot_R K_\infty(I)$.
\end{lem}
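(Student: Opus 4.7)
The plan is to fix generators of $I$ that descend to generators of $I/J$ and then invoke compatibility of the tensor product with base change from $R$ to $R/J$. Specifically, I would pick generators $f_1,\ldots,f_n$ of $I$; their images $\bar f_1,\ldots,\bar f_n$ generate $I/J$ as an ideal of $R/J$, so with this choice both sides of the claimed isomorphism become explicit tensor products of two-term complexes: $K_\infty(I)=K_\infty(f_1)\ot_R\cdots\ot_R K_\infty(f_n)$ and $K_\infty(I/J)=K_\infty(\bar f_1)\ot_{R/J}\cdots\ot_{R/J} K_\infty(\bar f_n)$.

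The single-generator case $R/J\ot_R K_\infty(f)\cong K_\infty(\bar f)$ is immediate: applying $R/J\ot_R-$ to $R\to R_f$ yields $R/J\to R/J\ot_R R_f$, and since localization commutes with base change one has $R/J\ot_R R_f\cong (R/J)_{\bar f}$. To assemble these single-generator identifications into the multi-generator statement, I would use the standard identity that for any $R$-modules $A,B$ there is a natural isomorphism $(R/J\ot_R A)\ot_{R/J}(R/J\ot_R B)\cong R/J\ot_R(A\ot_R B)$, which follows from associativity, from $R/J\ot_R R/J\cong R/J$, and from the observation that the $R$-tensor product of two $R/J$-modules agrees with their $R/J$-tensor product. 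By induction, this extends to a natural isomorphism $R/J\ot_R(A_1\ot_R\cdots\ot_R A_n)\cong (R/J\ot_R A_1)\ot_{R/J}\cdots\ot_{R/J}(R/J\ot_R A_n)$.

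Applying this iterated identity with $A_i=K_\infty(f_i)$ and then the single-generator computation to each factor produces the desired isomorphism $R/J\ot_R K_\infty(I)\cong K_\infty(I/J)$. There is no serious obstacle here; the argument is entirely formal. The only point to keep in mind is that $K_\infty(I)$ depends on the choice of generators only up to quasi-isomorphism, but this is harmless once the same generators $f_i$ are used on both sides, in which case the isomorphism is literal as complexes of $R/J$-modules.
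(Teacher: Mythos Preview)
Your proposal is correct and follows essentially the same approach as the paper: fix generators $f_1,\ldots,f_n$ of $I$, verify the single-generator identity $R/J\ot_R K_\infty(f_i)\cong K_\infty(\bar f_i)$, and then collapse the iterated tensor product using $R/J\ot_R R/J\cong R/J$. The only cosmetic difference is that you phrase the assembly step via the base-change identity $(R/J\ot_R A)\ot_{R/J}(R/J\ot_R B)\cong R/J\ot_R(A\ot_R B)$ and induction, whereas the paper rearranges the factors directly; these are the same computation.
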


\begin{proof}
Let $I=(f_1,\ldots,f_n)$. Then $I/J=(\ol{f_1},\ldots,\ol{f_n})$. Since
\[
K_\infty(\ol{f_i})=(R/J\to (R/J)_{\ol{f_i}})
\cong (R/J \ot_R (R \to R_{f_i}))
=(R/J \ot_R K_\infty(f_i)),
\]
we have $K_\infty(I/J)=\bigotimes_{i=1}^n K_\infty(\ol{f_i}) \cong (\bigotimes_{i=1}^n R/J) \ot_R K_\infty(I)$. Since $R/J\ot_R R/J\cong (R/J)/(JR/J)\cong R/J$, we conclude that $K_\infty(I/J) \cong R/J \ot_R K_\infty(I)$.
\end{proof}

\begin{rec}\label{rec:derived}%
The derived category $\rmD(R)$ is a rigidly-compactly generated tensor triangulated category, in the sense of~\cite{BalmerFavi11}, whose subcategory of compact objects is $\rmD^{\mathrm{perf}}(R)$ the subcategory of perfect complexes, i.e., bounded complexes of finitely generated projective $R$-modules up to quasi-isomorphism. There is a homeomorphism $\Spc(\rmD^{\mathrm{perf}}(R))\cong \Spec R$ between the Balmer spectrum of $\rmD(R)$ and $\Spec R$~\cite{Neeman92}. Treating this map as an identification, if $\mfp \in \Spec R$, then the associated Balmer--Favi idempotent is $g_\mfp=K_\infty(\mfp)\ot_R R_\mfp\in \rmD(R)$, where $R_\mfp$ is the localization of $R$ at $\mfp$. Note that $g_\mfp$ is $\rmK$-flat. The Balmer--Favi support of an object $X\in \rmD(R)$ is $\Supp(X)=\{\mfp \in \Spec R\mid g_\mfp \ot_R X\neq 0\}$.
\end{rec}

\begin{rec}\label{rec:singularity}%
The \emph{singularity category} of $R$ is $\rmS(R)\coloneqq \rmK_{\mathrm{ac}}(\Inj R)$ the homotopy category of acyclic complexes of injective $R$-modules. By~\cite{Krause05}, $\rmS(R)$ is a compactly generated triangulated category and there is a recollement
\[
\begin{tikzcd}
\rmS(R) \rar["I"description] \rar[phantom,shift left=2ex,"\perp"] \rar[phantom,shift right=2ex,"\perp"]& \rmK(\Inj R) \rar["Q"description] \rar[phantom,shift left=2ex,"\perp"] \rar[phantom,shift right=2ex,"\perp"] \lar["I_\gl"',shift right=3.2ex] \lar["I_\gr",shift left=3.2ex]& \rmD(R), \lar["Q_\gl"',shift right=3.2ex] \lar["Q_\gr",shift left=3.2ex]
\end{tikzcd}
\]
where $I$ is the inclusion and $Q$ is the composite $\rmK(\Inj R)\hookrightarrow \rmK(R)\twoheadrightarrow \rmD(R)$. The functor $I_\gl Q_\rho\colon \rmD(R)\to \rmS(R)$, called the \emph{stabilization functor}, induces an equivalence of triangulated categories between the idempotent completion of $\rmD_{\mathrm{Sg}}(R)=\rmD^\rmb(\smod R)/\rmD^{\mathrm{perf}}(R)$ and the subcategory of compact objects of $\rmS(R)$. When there are multiple rings involved, we will use the notation $I_\gl Q_\rho^R$.
\end{rec}

\begin{rec}\label{rec:regular}%
The ring $R$ is called a \emph{regular} ring if $R_\mfp$ is a \emph{regular local} ring, i.e., $\gldim R_\mfp<\infty$, for all $\mfp \in \Spec R$ (this is just one of the many equivalent definitions of a regular local ring). It is a fact that $R$ is regular if and only if $\rmS(R)=0$. The \emph{singular locus} of $R$ is $\Sing R=\{ \mfp\in \Spec R \mid \gldim R_\mfp=\infty\}$, i.e., $\Sing R$ consists of those prime ideals $\mfp$ such that $R_\mfp$ is not a regular local ring. Clearly, $R$ is a regular ring if and only if $\Sing R=\varnothing$.

A commutative noetherian local ring $S$ is called a \emph{hypersurface} ring if the completion of $S$ at its unique maximal ideal is isomorphic to the quotient of a regular ring by a regular element. It holds that a hypersurface ring is Gorenstein. We say that $R$ is a \emph{locally hypersurface} ring if $R_\mfp$ is a hypersurface ring, for all $\mfp\in \Spec R$. It holds that a locally hypersurface ring is Gorenstein; see~\Cref{defn:gorenstein}.
\end{rec}

\begin{rec}\label{rec:action}%
By~\cite{Stevenson14}, there is an \emph{action} $-\ast -\colon \rmD(R) \times \rmS(R)\to \rmS(R)$, i.e., a coproduct-preserving triangulated bifunctor such that $X\ast (Y \ast A) \cong (X\ot^\rmL_R Y) \ast A$ and $R\ast A\cong A,\, \forall X,Y\in \rmD(R),\, \forall A\in \rmS(R)$ defined as follows: If $X\in \rmD(R)$ and $A\in \rmS(R)$, then $X\ast A=\wt{X}\ot_R A$, where $\wt{X}$ is a $\rmK$-\emph{flat resolution} of $X$, i.e., $\wt{X}$ is a $\rmK$-flat complex quasi-isomorphic to $X$.
\end{rec}

\begin{rem}\label{rem:kflat-resolutions}%
The action of $\rmD(R)$ on $\rmS(R)$ does not depend on the choice of $\rmK$-flat resolutions, in the sense that different $\rmK$-flat resolutions of the same object yield naturally isomorphic functors. Moreover, any complex of $R$-modules admits a $\rmK$-flat resolution that consists of flat $R$-modules (these are sometimes called semi-flat complexes); see~\cite[Corollary 3.22]{Murfet07} for a more general version of this result concerning schemes. Hence, when we consider the action of $\rmD(R)$ on $\rmS(R)$, we can assume that all $\rmK$-flat resolutions are semi-flat.
\end{rem}
If $X\in \rmD(R)$, then the functor $X\ast -\colon \rmS(R) \to \rmS(R)$ is a coproduct-preserving triangulated functor. Since $\rmS(R)$ is compactly generated, Brown representability implies that $X\ast -$ has a right adjoint $[X,-]$; see~\cite[Theorem 4.1]{Neeman96}. As in~\cite{Verasdanis23a}, we call $[X,-]$ the \emph{relative internal-hom}. Note that $[R,-]\cong \Id_{\rmS(R)}$, since $[R,-]$ is the right adjoint of $R\ast -\cong \Id_{\rmS(R)}$.

Let $\wt{X}$ be a $\rmK$-flat resolution of $X$. We have an adjunction
\begin{equation}\label{eq:adj-on-kr}%
\begin{tikzcd}[column sep=5em]
\rmK(R) \rar[shift left=1.2ex,"\wt{X}\ot_R {-}"] \rar[phantom,"\perp"] & \rmK(R) \lar[shift left=1.2ex,"\Hom_R(\wt{X}{,}{-})"]
\end{tikzcd}
\end{equation}
that restricts to an adjunction on $\rmS(R)$. We will show this next, using a result of Emmanouil~\cite{Emmanouil23} which was based on work of \v{S}t'ov\'{\i}\v{c}ek~\cite{Stovicek14}.
\begin{prop}\label{prop:int-hom well-defined}%
Let $F$ be a $\rmK$-flat complex of flat $R$-modules and let $A$ be an acyclic complex of injective $R$-modules. Then $\Hom_R(F,A)$ is an acyclic complex of injective $R$-modules. In particular, the adjunction~\eqref{eq:adj-on-kr} restricts to an adjunction on $\rmS(R)$.
\end{prop}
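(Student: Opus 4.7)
The plan is to split the statement into two claims---each term of $\Hom_R(F,A)$ is injective, and the total complex is acyclic---and then observe that the "in particular" part follows formally.

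For the termwise injectivity, the $n$-th component of $\Hom_R(F,A)$ is $\prod_{p\in\bbZ}\Hom_R(F^p,A^{p+n})$. Since $F^p$ is flat and $A^{p+n}$ is injective, hom-tensor adjunction identifies $\Hom_R(-,\Hom_R(F^p,A^{p+n}))$ with $\Hom_R(-\ot_R F^p,A^{p+n})$, which is exact as a composition of two exact functors; hence $\Hom_R(F^p,A^{p+n})$ is injective. An arbitrary direct product of injective $R$-modules is injective (products commute with $\Hom_R(-, -)$ in the second slot, and products of exact functors are exact), so each term of $\Hom_R(F,A)$ is injective.

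For the acyclicity, I would invoke Emmanouil's theorem~\cite{Emmanouil23}, which builds on \v{S}t'ov\'{\i}\v{c}ek's analysis of K-flat complexes~\cite{Stovicek14}: if $F$ is a K-flat complex of flat modules and $A$ is an acyclic complex of injective modules, then $\Hom_R(F,A)$ is acyclic. Equivalently, $\Hom_{\rmK(R)}(F,A[n])=0$ for every $n$. The moral picture is that K-flat complexes of flats behave like flat resolutions, and acyclic complexes of injectives are "K-injective" against them; concretely, $\Hom_R(F,-)$ inverts quasi-isomorphisms between complexes of injectives, and hence carries $A\simeq 0$ to a complex quasi-isomorphic to zero. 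This step is the genuine obstacle: flatness of each individual term $F^p$ alone does not suffice---a direct computation of $H^n(\Hom_R(M,A))$ for a single flat module $M$ produces $\Ext_R^1(M,Z^{n-1}(A))$ using the short exact sequences $0\to Z^n(A)\to A^n\to Z^{n+1}(A)\to 0$, and these $\Ext^1$ groups need not vanish---so the global K-flatness hypothesis on $F$ is essential.

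Finally, the "in particular" assertion is formal. The right adjoint $\Hom_R(\wt{X},-)$ carries $\rmS(R)$ into $\rmS(R)$ by what we have just shown, while the left adjoint $\wt{X}\ot_R-$ sends any acyclic complex to an acyclic complex by K-flatness of $\wt{X}$; so, after identifying $\rmS(R)$ inside $\rmK(R)$ via the recollement of~\Cref{rec:singularity}, the adjunction~\eqref{eq:adj-on-kr} descends to an adjunction on $\rmS(R)$.
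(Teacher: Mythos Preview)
Your proof is correct and follows essentially the same route as the paper: both establish termwise injectivity via the hom--tensor adjunction and closure of injectives under products, and both invoke Emmanouil's result (built on \v{S}t'ov\'{\i}\v{c}ek's work) to obtain $\Hom_{\rmK(R)}(F,\gS^n A)=0$ for all $n$, hence acyclicity. The only cosmetic differences are the order of the two steps and your added heuristic commentary on why $\rmK$-flatness (rather than mere termwise flatness) is needed.
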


\begin{proof}
Since $A$ is an acyclic complex that consists of injective $R$-modules, hence of pure-injective $R$-modules, it holds that $\gS^n A$ is an acyclic complex of pure-injective $R$-modules, for all $n\in\ \bbZ$. According to~\cite[Proposition 3.1]{Emmanouil23}, it follows that $\Hom_{\rmK(R)}(F,\gS^n A)=0$. Thus, $\rmH^n(\Hom_R(F,A))=\Hom_{\rmK(R)}(F,\gS^n A)=0,\ \forall n\in \bbZ$. In other words, $\Hom_R(F,A)$ is acyclic. Each term of the complex $\Hom_R(F,A)$ is a product of $R$-modules of the form $\Hom_R(M,N)$, where $M$ is a flat $R$-module and $N$ is an injective $R$-module. So, the functor $\Hom_R(-,\Hom_R(M,N))$, which is naturally isomorphic to $\Hom_R(M\ot_R -,N)$, is exact. Equivalently, $\Hom_R(M,N)$ is an injective $R$-module. Since injective $R$-modules are closed under products, $\Hom_R(F,A)$ consists of injective $R$-modules.

By what we just proved, we conclude that the restrictions of the functors involved in~\eqref{eq:adj-on-kr} on $\rmS(R)$ take values in $\rmS(R)$. Hence, we obtain the adjunction
\[
\begin{tikzcd}[column sep=5em]
\rmS(R) \rar[shift left=1.2ex,"\wt{X}\ot_R {-}"] \rar[phantom,"\perp"] & \rmS(R). \lar[shift left=1.2ex,"\Hom_R(\wt{X}{,}{-})"]
\end{tikzcd}\qedhere
\]
\end{proof}

\begin{cor}\label{cor:rel int-hom is hom}%
Let $X\in \rmD(R)$ and let $\wt{X}$ be a $\rmK$-flat resolution of $X$ that consists of flat $R$-modules. Then $[X,-] = \Hom_R(\wt{X},-)\colon \rmS(R)\to \rmS(R)$.
\end{cor}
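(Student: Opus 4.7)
The plan is to invoke uniqueness of right adjoints. By~\Cref{rec:action} combined with~\Cref{rem:kflat-resolutions}, since $\wt{X}$ is a $\rmK$-flat (in fact, semi-flat) resolution of $X$, the action functor $X\ast -\colon \rmS(R)\to \rmS(R)$ is naturally isomorphic to $\wt{X}\ot_R -$. By definition, $[X,-]$ is the right adjoint of $X\ast -$.

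On the other hand, \Cref{prop:int-hom well-defined} just established that the adjunction~\eqref{eq:adj-on-kr} on $\rmK(R)$ restricts to an adjunction on $\rmS(R)$, whose left adjoint is precisely $\wt{X}\ot_R -$ and whose right adjoint is $\Hom_R(\wt{X},-)$. Thus $\Hom_R(\wt{X},-)$ is also a right adjoint of $X\ast -\cong \wt{X}\ot_R-$ on $\rmS(R)$, and the uniqueness of right adjoints (up to canonical natural isomorphism) yields $[X,-]\cong \Hom_R(\wt{X},-)$.

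There is essentially no obstacle; the corollary is a direct consequence of the preceding proposition together with the definition of the action and the definition of $[X,-]$. The only minor point one should be careful about is that the identification $X\ast -\cong \wt{X}\ot_R-$ does not depend on the chosen $\rmK$-flat resolution (as noted in~\Cref{rem:kflat-resolutions}), so the resulting formula for $[X,-]$ is well-defined up to natural isomorphism regardless of the specific flat resolution $\wt{X}$.
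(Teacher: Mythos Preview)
Your proof is correct and follows essentially the same approach as the paper: both invoke \Cref{prop:int-hom well-defined} to see that $\Hom_R(\wt{X},-)$ is right adjoint to $\wt{X}\ot_R-=X\ast-$ on $\rmS(R)$, and then conclude by uniqueness of right adjoints. Your version simply spells out a few more details (the identification $X\ast-\cong\wt{X}\ot_R-$ via \Cref{rec:action} and \Cref{rem:kflat-resolutions}, and the independence of the choice of resolution).
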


\begin{proof}
The claim follows immediately from~\Cref{prop:int-hom well-defined} due to the fact that $\Hom_R(\wt{X},-)\colon \rmS(R)\to \rmS(R)$ is right adjoint to $X\ast -=\wt{X}\ot_R -\colon \rmS(R)\to \rmS(R)$.
\end{proof}

Every colocalizing subcategory of $\rmS(R)$ is a $\shom$-\emph{submodule} over $\rmD(R)$, in the sense of the following lemma:
\begin{lem}\label{lem:all-are-hom-submodules}%
Let $\scC$ be a colocalizing subcategory of $\rmS(R)$. Then it holds that $[X,A]\in \scC,\, \forall X\in \rmD(R),\, \forall A\in \scC$.
\end{lem}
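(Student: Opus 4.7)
The plan is to fix $A\in\scC$ and consider the full subcategory
\[
\scD_A \coloneqq \{X\in\rmD(R) \mid [X,A]\in \scC\}\subseteq \rmD(R).
\]
The goal is to show $\scD_A=\rmD(R)$. Since $\rmD(R)$ is compactly generated by the single object $R$, it suffices to establish that $\scD_A$ is a localizing subcategory of $\rmD(R)$ and contains $R$; then $\scD_A$ must equal the smallest localizing subcategory containing $R$, which is all of $\rmD(R)$.

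The containment $R\in\scD_A$ is immediate from the discussion following~\Cref{rem:kflat-resolutions}: the functor $R\ast-$ is naturally isomorphic to $\Id_{\rmS(R)}$, so its right adjoint $[R,-]$ is also $\Id_{\rmS(R)}$, whence $[R,A]\cong A\in\scC$.

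For the localizing closure properties, I would exploit that $[-,A]\colon \rmD(R)^{\rmop}\to \rmS(R)$ is a triangulated functor that converts coproducts to products. Contravariance and the triangulated structure follow from the corresponding properties of $-\ast A$ together with the adjunction: a triangle $X\to Y\to Z\to \gS X$ in $\rmD(R)$ induces, via the adjoint pairs $(X\ast-,[X,-])$, a triangle $[Z,A]\to [Y,A]\to [X,A]\to \gS[Z,A]$ in $\rmS(R)$. Because $\scC$ is colocalizing, it is thick, so the 2-out-of-3 property forces the third vertex of such a triangle into $\scC$ whenever the other two are. For coproducts, $(\bigoplus_i X_i)\ast-\cong \bigoplus_i(X_i\ast-)$ as coproduct-preserving functors, whose right adjoint is the product of the right adjoints; hence
\[
\Bigl[\bigoplus_i X_i,\, A\Bigr]\cong \prod_i [X_i,A],
\]
and the right-hand side lies in $\scC$ whenever each $[X_i,A]$ does, since colocalizing subcategories are closed under products. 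Closure under shifts is automatic from the triangulated structure.

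There is no real obstacle here; the whole argument is a ``devissage'' of $\rmD(R)$ from its compact generator $R$. The only point that requires a small sanity check is that $-\ast-$ preserves coproducts in the \emph{first} variable (so that $[-,A]$ turns coproducts into products), but this is built into the definition of an action as a coproduct-preserving triangulated bifunctor (\Cref{rec:action}).
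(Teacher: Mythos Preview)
Your argument is correct and is essentially identical to the paper's own proof: fix $A\in\scC$, set $\scX=\{X\in\rmD(R)\mid [X,A]\in\scC\}$, verify that $\scX$ is a localizing subcategory of $\rmD(R)$ containing $R$, and conclude $\scX=\rmD(R)$ since $\rmD(R)=\loc(R)$. The paper simply declares the localizing-closure verification ``straightforward'' (citing \Cref{cor:rel int-hom is hom} and \cite[Remark~6.1]{Verasdanis23a}), whereas you spell out the triangle and coproduct-to-product arguments explicitly.
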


\begin{proof}
Let $A\in \scC$ and set $\scX=\{X\in \rmD(R) \mid [X,A]\in \scC\}$. It is straightforward to verify that $\scX$ is a localizing subcategory of $\rmD(R)$, taking into account~\Cref{cor:rel int-hom is hom} and~\cite[Remark 6.1]{Verasdanis23a}. Moreover, $[R,A]\cong A\in \scC$ implies $R\in \scX$. Since $\rmD(R)=\loc(R)$, it follows that $\scX=\rmD(R)$, which proves the statement.
\end{proof}

\section{Gorenstein rings}
\label{sec:gorenstein}
In this section, we recall some facts about Gorenstein rings, Gorenstein-injective and Gorenstein-projective modules and the stable category of Gorenstein-injective modules.
\begin{defn}\label{defn:gorenstein}%
Let $R$ be a commutative noetherian ring and let $M$ be an $R$-module.
\begin{enumerate}[\rm(a)]
\item
The ring $R$ is called \emph{Gorenstein} if $R$ has finite injective dimension as an $R$-module.
\item
The $R$-module $M$ is called \emph{Gorenstein-injective} if there exists an acyclic complex $C$ that consists of injective $R$-modules such that $\Hom_R(I,C)$ is acyclic, for all injective $R$-modules $I$ and $M=\rmZ^0 C$ the kernel of the zeroth differential of $C$. Such a complex $C$ is called a \emph{complete injective resolution} of $M$.
\item
The $R$-module $M$ is called \emph{Gorenstein-projective} if there exists an acyclic complex $C$ that consists of projective $R$-modules such that $\Hom_R(C,P)$ is acyclic, for all projective $R$-modules $P$ and $M=\rmZ^0 C$ the kernel of the zeroth differential of $C$. Such a complex is called a \emph{complete projective resolution} of $M$.
\item
The \emph{Gorenstein-injective envelope} of $M$ is (if it exists) a Gorenstein-injective $R$-module $G_R(M)$ together with a morphism $f\colon M\to G_R(M)$ such that the following two conditions hold: First, for all Gorenstein-injective $R$-modules $G$ and morphisms $g\colon M\to G$, there exists a morphism $h\colon G_R(M)\to G$ such that $g=h\circ f$. Second, if $h\colon G_R(M)\to G_R(M)$ is a morphism such that $h\circ f=f$, then $h$ is an isomorphism.
\item
The \emph{Gorenstein-projective cover} of $M$ is (if it exists) a Gorenstein-projective $R$-module $G^R(M)$ together with a morphism $f\colon G^R(M)\to M$ such that the following two conditions hold: First, for all Gorenstein-projective $R$-modules $G$ and morphisms $g\colon G\to M$, there exists a morphism $h\colon G\to G^R(M)$ such that $g=f\circ h$. Second, if $h\colon G^R(M)\to G^R(M)$ is a morphism such that $f=f\circ h$, then $h$ is an isomorphism.
\end{enumerate}
\end{defn}

\begin{prop}[{\cite[Theorem 11.3.2, Theorem 11.6.9]{EnochsJenda00}}]\label{prop:envelopes-covers}%
If $R$ is a Gorenstein ring, then any $R$-module admits a Gorenstein-injective envelope. If, moreover, $R$ is local, then any finitely generated $R$-module admits a finitely generated Gorenstein-projective cover.
\end{prop}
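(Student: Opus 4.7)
The plan is to establish both statements using approximation theory (covers and envelopes), with the unifying input that over a Gorenstein ring every module has finite Gorenstein-injective dimension and, in the finitely generated local setting, finite Gorenstein-projective dimension. In both cases, the existence of approximating short exact sequences is the easier part, while upgrading preenvelopes/precovers to genuine envelopes/covers is where the real work lies.

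For the first statement, I would first show that if $R$ is Gorenstein of finite self-injective dimension $d$, then every $R$-module $M$ admits a short exact sequence
\[
0\to M\to G\to L\to 0
\]
with $G$ Gorenstein-injective and $L$ of finite injective dimension. This goes by induction on $d$, bootstrapped from an embedding of $M$ into an injective (which is trivially Gorenstein-injective). The map $M\to G$ is then a \emph{special} Gorenstein-injective preenvelope. Next, I would verify that the class of Gorenstein-injective modules is closed under arbitrary products: the product of complete injective resolutions is a complex of injectives (injectives are closed under products), and $\Hom_R(I,-)$ commutes with products, so acyclicity against injective test modules is preserved. Finally, using a Wakamatsu-type argument together with the vanishing $\Ext^1_R(L,G')=0$ for every Gorenstein-injective $G'$ whenever $L$ has finite injective dimension, I would refine the preenvelope to an envelope by extracting a minimal direct summand of $G$.

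For the second statement, the strategy is dualized and executed inside the category $\smod R$. Over a local Gorenstein ring, every finitely generated module has finite Gorenstein-projective dimension, so one obtains a finitely generated special Gorenstein-projective precover $0\to K\to G\to M\to 0$ with $K$ finitely generated of finite projective dimension. The local hypothesis enters at the final step: one passes to a minimal such approximation, which exists for finitely generated modules over a local ring by the standard Nakayama argument, and minimality upgrades the precover property to the cover property.

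The main obstacle, as flagged above, is the refinement from preenvelope/precover to genuine envelope/cover, not the existence of the approximating sequences. For envelopes this refinement requires both product-closure of the Gorenstein-injectives and a Wakamatsu-style $\Ext^1$-vanishing lemma; for covers it depends crucially on locality plus finite generation so that Nakayama yields a minimal approximation. These non-formal steps are precisely where the Gorenstein hypothesis, and in the second case additionally the local hypothesis, are genuinely used.
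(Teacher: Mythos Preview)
The paper does not supply its own proof of this proposition; it is stated with a bare citation to \cite[Theorems~11.3.2 and~11.6.9]{EnochsJenda00} and used as a black box thereafter. Your outline is essentially the strategy of the cited source: over a Gorenstein ring one first produces a special Gorenstein-injective preenvelope $0\to M\to G\to L\to 0$ with $L$ of finite injective dimension (dually, in $\smod R$ over a local Gorenstein ring, a finitely generated special Gorenstein-projective precover via Auslander--Buchweitz approximation), and then upgrades to a genuine envelope (resp.\ cover) by passing to a minimal such approximation.

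One imprecision worth flagging: Wakamatsu's lemma runs in the \emph{opposite} direction to what you describe---it says that a minimal left approximation is automatically special, not that a special preenvelope can be refined to an envelope. The passage from special preenvelope to envelope in the first statement is the genuinely delicate step and is not achieved by ``extracting a minimal direct summand'' plus Wakamatsu; in Enochs--Jenda it goes through a separate minimality/reduction argument exploiting the hereditary cotorsion pair $(\mcL,\ginj R)$ with $\mcL$ the modules of finite injective dimension. Your treatment of the second statement, using Nakayama over a local ring to obtain a minimal approximation inside $\smod R$, is the standard and correct argument.
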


\begin{rec}\label{rec:ginj-sing-equiv}%
The category $\ginj R$ of Gorenstein-injective $R$-modules is an exact subcategory of $\Mod R$ with exact sequences those short exact sequences of Gorenstein-injective $R$-modules. In fact, $\ginj R$ is a Frobenius exact category, i.e., $\ginj R$ has enough projectives and enough injectives and its projective and injective objects coincide: they are precisely the injective $R$-modules. So, $\sginj R$ the stable category of Gorenstein-injective $R$-modules is a triangulated category. According to~\cite[Proposition 7.13]{Krause05}, if $R$ is a Gorenstein ring, there is an equivalence of triangulated categories $\rmS(R) \xr{\simeq} \sginj R$ given by mapping $A\in \rmS(R)$ to $\rmZ^0 A$ with inverse given by sending a Gorenstein-injective $R$-module $M$ to a complete injective resolution $C(M)$. Furthermore, by~\cite[Corollary 4.8]{Stevenson14}, the functors $G_R,\rmZ^0 I_\gl Q_\rho\colon \Mod R \to \sginj R$ are naturally isomorphic.
\end{rec}

\section{Endofiniteness, pure-injectivity and residue fields}
\label{sec:endofinite}%
Let $\scT$ be a compactly generated triangulated category. The category $\Mod(\scT^\rmc)$ of additive functors $\{\scT^\rmc\}^\rmop \to \mathrm{Ab}$ is a (Grothendieck) abelian category and there is a functor $\wh{(-)}\colon \scT \to \Mod(\scT^\rmc)$ that sends $X\in \scT$ to $\wh{X}=\Hom_\scT(-,X)\rvert_{\scT^\rmc}\in \Mod(\scT^\rmc)$ called the \emph{restricted Yoneda functor}. A morphism $f\colon X\to Y$ in $\scT$ is called a \emph{pure monomorphism} if $\wh{f}\colon \wh{X} \to \wh{Y}$ is a monomorphism in $\Mod(\scT^\rmc)$. An object $X \in \scT$ is called \emph{pure-injective} if every pure-monomorphism $f\colon X\to Y$ splits (meaning that there exists a morphism $g\colon Y\to X$ such that $g\circ f=1_X$). By~\cite[Theorem 1.8]{Krause00}, an object $X\in \scT$ is pure-injective if and only if $\wh{X}$ is an injective object of $\Mod(\scT^\rmc)$. (In fact, an object $E\in \Mod(\scT^\rmc)$ is injective if and only if there exists a pure-injective object $X\in \scT$ such that $E\cong \wh{X}$; see~\cite[Corollary 1.9]{Krause00}.)

\begin{rem}\label{rem:right-adjoints-preserve-pinj}%
Let $\scT$ and $\scU$ be compactly generated triangulated categories and let $F\colon \scT \to \scU$ be a coproduct-preserving triangulated functor. Then $F$ has a right adjoint $G\colon \scU\to \scT$; see~\cite[Theorem 4.1]{Neeman96}. If $G$ preserves coproducts, then $G$ preserves pure-injective objects: There is an induced adjunction
\[
\begin{tikzcd}[column sep=5em]
\Mod(\scT^\rmc) \rar["\ol{F}",shift left=1.5ex] \rar[phantom,"\perp"] & \Mod(\scU^\rmc), \lar["\ol{G}",shift left=1.5ex]
\end{tikzcd}
\]
where $\ol{F}$ is an exact functor and $\ol{F}$ and $\ol{G}$ commute with the restricted Yoneda functors. Since $\ol{G}$ is the right adjoint of an exact functor, $\ol{G}$ preserves injective objects. By the characterization of pure-injective objects as precisely those objects whose image under the restricted Yoneda functor is injective, it follows that $\ol{G}$ preserves pure-injective objects; see~\cite[Proposition 2.6]{Krause00}.
\end{rem}

\begin{rem}\label{rem:res-preserves-pinj}%
Let $\mfp$ be a prime ideal of $R$. Since $R_\mfp\ot_R - \colon \Mod R\to \Mod R_\mfp$ is an exact functor with right adjoint $\res \colon \Mod R_\mfp \to \Mod R$, it follows that $\res$ preserves injective modules. Moreover, since $R$ is noetherian, localization preserves injective modules, so $R_\mfp\ot_R -$ preserves injective modules. Consequently, we have an adjunction
\[
\begin{tikzcd}[column sep=5em]
\rmS(R) \rar["R_\mfp\ot_R -",shift left=1.5ex] \rar[phantom,"\perp"] & \rmS(R_\mfp). \lar["\res",shift left=1.5ex]
\end{tikzcd}
\]
Since the categories $\rmS(R)$ and $\rmS(R_\mfp)$ are compactly generated and $\res$ preserves coproducts, by~\Cref{rem:right-adjoints-preserve-pinj}, it follows that $\res\colon \rmS(R_\mfp)\to \rmS(R)$ preserves pure-injective objects.
\end{rem}

An object $X$ of a triangulated category $\scT$ is called \emph{endofinite} if, for all compact objects $C$ of $\scT$, it holds that $\Hom_\scT(C,X)$ is a finite length module over $\End_\scT(X)$. By~\cite[Theorem 1.2]{Krause99} (see also~\cite[Proposition 3.3]{KrauseReichenbach00} and~\cite[Proposition 5.6]{Krause23}) endofinite objects are pure-injective.

\begin{prop}\label{prop:residue field endofinite}%
Let $R=(R,\mfm,k)$ be a local Gorenstein ring. Then $I_\gl Q_\rho (k)$ is an endofinite (hence a pure-injective) object of $\rmS(R)$.
\end{prop}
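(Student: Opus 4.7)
The plan is to reduce endofiniteness of $I_\gl Q_\rho(k)$ to showing that $\Hom_{\rmS(R)}(C, I_\gl Q_\rho(k))$ is finite-dimensional over $k$ for every compact $C \in \rmS(R)^\rmc$.

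Since $I_\gl Q_\rho$ is $R$-linear and $\mfm$ annihilates $k$ in $\Mod R$, the canonical ring homomorphism $R \to \End_{\rmS(R)}(I_\gl Q_\rho(k))$ factors through $R/\mfm = k$, making $\End_{\rmS(R)}(I_\gl Q_\rho(k))$ a $k$-algebra. For any compact $C$, the group $\Hom_{\rmS(R)}(C, I_\gl Q_\rho(k))$ thereby becomes a $k$-vector space whose length as an $\End_{\rmS(R)}(I_\gl Q_\rho(k))$-module is bounded above by its $k$-dimension, yielding the reduction.

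For the finite-dimensionality, by~\Cref{rec:singularity} every compact object of $\rmS(R)$ is a direct summand of some $I_\gl Q_\rho(M)$ with $M \in \rmD^\rmb(\smod R)$, and the induced functor $\rmD_{\mathrm{Sg}}(R) \to \rmS(R)$ is fully faithful. Hence $\Hom_{\rmS(R)}(I_\gl Q_\rho(M), I_\gl Q_\rho(k)) \cong \Hom_{\rmD_{\mathrm{Sg}}(R)}(M, k)$, and via the truncation triangles in $\rmD^\rmb(\smod R)$ we may further reduce to the case $M \in \smod R$. Since $R$ is Gorenstein, Buchweitz's theorem identifies this morphism space with the Tate cohomology $\wh{\Ext}^0_R(M, k)$, computed from a complete projective resolution $P_*$ of $M$ whose terms are finitely generated free $R$-modules. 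Each $\Hom_R(P_i, k) \cong k^{\mathrm{rk}(P_i)}$ is finite-dimensional over $k$, so the subquotient $\wh{\Ext}^0_R(M, k) = \rmH^0(\Hom_R(P_*, k))$ is finite-dimensional over $k$ as well.

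The main obstacle is this last step: invoking Buchweitz's identification of $\Hom_{\rmD_{\mathrm{Sg}}(R)}$ with Tate Ext over Gorenstein rings, together with the existence of complete projective resolutions by finitely generated free modules. With this input in hand the endofiniteness follows, and the parenthetical pure-injectivity is then immediate from the theorem of Krause cited just before the statement.
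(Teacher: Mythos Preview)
Your proof is correct and follows essentially the same approach as the paper: reduce to showing that $\Hom_{\rmD_{\mathrm{Sg}}(R)}(M,k)$ is finite-dimensional over $k$ for $M\in\smod R$, then compute this as Tate cohomology via a complete (projective) resolution with finitely generated terms. The only cosmetic differences are that the paper cites Orlov's lemma to reduce to shifts of modules (where you use truncation triangles), identifies $\End_{\rmD_{\mathrm{Sg}}(R)}(k)\cong k$ directly (where you only need, and correctly argue, that $\End_{\rmS(R)}(I_\gl Q_\rho(k))$ is a $k$-algebra), and obtains the complete resolution by first passing to the finitely generated Gorenstein-projective cover $G^R(M)$ via~\Cref{prop:envelopes-covers} and then invoking~\cite[Proposition 7.7]{Krause05}, whereas you package the same step as ``Buchweitz's theorem.''
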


\begin{proof}
Since $k=R/\mfm$ is a finitely generated $R$-module, it follows that $k$ is an object of $\rmD_{\mathrm{Sg}}(R)$. As we have already discussed in~\Cref{rec:singularity}, the subcategory of compact objects of $\rmS(R)$ is equivalent to the closure under summands of the image of $\rmD_{\mathrm{Sg}}(R)$ under $I_\gl Q_\rho$. Further, by~\cite[Lemma 1.11]{Orlov04}, every object of $\rmD_{\mathrm{Sg}}(R)$ is of the form $\gS^i M$, where $M$ is a finitely generated $R$-module. Consequently, it suffices to show that $\Hom_{\rmD_{\mathrm{Sg}}(R)}(\gS^i M,k)$ is a module of finite length over $\End_{\rmD_{\mathrm{Sg}}(R)}(k)\cong \End_R(k)\cong k$, i.e., a finite dimensional $k$-vector space. Since $R$ is a local Gorenstein ring and $M$ is a finitely generated $R$-module, it follows that $M$ has a finitely generated Gorenstein-projective cover $G^R(M)$; see~\Cref{prop:envelopes-covers}. Let $c(M)$ be a complete projective resolution of $G^R(M)$ that consists of finitely generated projective $R$-modules. Now we compute:
\begin{align*}
\Hom_{\rmD_{\mathrm{Sg}}(R)}(\gS^iM,k)&\cong \ul{\Hom}_R(M,\gS^{-i}G_R(k))\\
&\cong \wh{\Ext}_R^{-i}(M,k)\\
&=\rmH^{-i}(\Hom_R(c(M),k))
\end{align*}
with the second isomorphism by~\cite[Proposition 7.7]{Krause05} and the subsequent equality by definition. Since $c(M)$ is a complex of finitely generated $R$-modules, it follows that $\Hom_R(c(M),k)$ is a complex of finite dimensional $k$-vector spaces. Hence, $\rmH^{-i}(\Hom_R(c(M),k))$ is a finite dimensional $k$-vector space. Consequently, $\Hom_{\rmD_\mathrm{Sg}(R)}(\gS^i M,k)$ is a finite dimensional $k$-vector space. In conclusion, $k$ is an endofinite object of $\rmD_{\mathrm{Sg}}(R)$ and so $I_\gl Q_\rho (k)$ is an endofinite (hence a pure-injective) object of $\rmS(R)$.
\end{proof}

\begin{prop}\label{prop:residue-pinj}%
Let $R$ be a Gorenstein ring. Then $I_\gl Q_\rho (k(\mfp))$ is a pure-injective object of $\rmS(R)$.
\end{prop}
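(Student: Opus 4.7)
The plan is to reduce the statement to the local case, \Cref{prop:residue field endofinite}, via the restriction of scalars functor $\res\colon \rmS(R_\mfp)\to \rmS(R)$. Since $R$ is Gorenstein, so is the localization $R_\mfp$ (a finite injective resolution of $R$ localizes to one of $R_\mfp$), and $R_\mfp$ is a local Gorenstein ring with residue field $k(\mfp)$. Applying \Cref{prop:residue field endofinite} to $R_\mfp$ yields that $I_\gl Q_\rho^{R_\mfp}(k(\mfp))$ is pure-injective in $\rmS(R_\mfp)$. By \Cref{rem:res-preserves-pinj}, $\res$ preserves pure-injective objects, so it suffices to establish the natural isomorphism
\[
I_\gl Q_\rho^R(k(\mfp))\cong \res\bigl(I_\gl Q_\rho^{R_\mfp}(k(\mfp))\bigr)\quad \text{in }\rmS(R).
\]

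To obtain this isomorphism, I would show that the restriction of scalars intertwines the two stabilization functors. The functor $R_\mfp\ot_R -$ is exact with right adjoint $\res$, and both preserve injective modules: the first because $R$ is noetherian, the second because it is right adjoint to an exact functor. Moreover, flatness of $R_\mfp$ over $R$ implies that $R_\mfp\ot_R -$ preserves acyclic complexes, so by adjunction $\res$ preserves K-injective complexes. Consequently, the restriction of a K-injective resolution of $k(\mfp)$ in $\rmK(\Inj R_\mfp)$ is a K-injective resolution in $\rmK(\Inj R)$, giving $\res\circ Q_\rho^{R_\mfp}\cong Q_\rho^R\circ \res$. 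The same preservation properties ensure that $\res$ sends the two constituents of the recollement for $R_\mfp$ into those for $R$, so $\res$ also commutes with the acyclization functor $I_\gl$, delivering the desired identification.

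The main obstacle is the careful verification of the compatibility of $\res$ with the two recollements, in particular the commutativity with $I_\gl$. Once this compatibility is granted, the conclusion is immediate by combining \Cref{prop:residue field endofinite} and \Cref{rem:res-preserves-pinj}.
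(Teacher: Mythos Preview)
Your proposal is correct and follows essentially the same approach as the paper: reduce to the local case via \Cref{prop:residue field endofinite}, invoke \Cref{rem:res-preserves-pinj}, and use the compatibility $I_\gl Q_\rho^R \circ \res \cong \res \circ I_\gl Q_\rho^{R_\mfp}$. The only difference is that the paper asserts this compatibility without argument here (it is justified elsewhere via the identification of the stabilization functor with the Gorenstein-injective envelope and \cite[Remark~6.11]{Stevenson14}), whereas you sketch it directly through the recollement.
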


\begin{proof}
Since $R$ is a Gorenstein ring, it follows that $R_\mfp$ is a local Gorenstein ring. Hence, by~\Cref{prop:residue field endofinite}, $I_\gl Q_\rho^{R_\mfp}(k(\mfp))$ is a pure-injective object of $\rmS(R_\mfp)$. We have $I_\gl Q_\rho^R(k(\mfp))=I_\gl Q_\rho^R(\res k(\mfp))=\res I_\gl Q_\rho^{R_\mfp}(k(\mfp))$. According to~\Cref{rem:res-preserves-pinj}, $\res \colon \rmS(R_\mfp) \to \rmS(R)$ preserves pure-injective objects. Hence, $I_\gl Q_\rho^R(k(\mfp))$ is pure-injective.
\end{proof}

\section{Cogeneration}
\label{sec:cogen}%
Our goal here is to prove that if $R=(R,\mfm,k)$ is a hypersurface ring, then the image of the functor $[g_\mfm,-]\colon \rmS(R)\to \rmS(R)$ is cogenerated by $I_\gl Q_\rho (k)$. The key results we will need are~\Cref{prop:residue-pinj} and the following:
\begin{lem}\label{lem:hom-koszul}%
Let $R=(R,\mfm,k)$ be a local Gorenstein ring. Then, in $\rmS(R)$,
\begin{equation}\label{eq:hom-gm}
\Hom_R(K_\infty(\mfm),I_\gl Q_\rho(k(\mfp)))\cong
\begin{cases}
I_\gl Q_\rho (k), & \mfp=\mfm,
\\
0, & \mfp \neq \mfm.
\end{cases}
\end{equation}
\end{lem}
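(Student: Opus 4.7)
The plan is to reduce the $\rmS(R)$-computation to a derived computation in $\rmD(R)$ via the compatibility of the stabilization functor with internal-hom: for $X,Y\in\rmD(R)$,
\[
[X, I_\gl Q_\rho(Y)] \cong I_\gl Q_\rho(R\Hom_R(X, Y))\quad\text{in }\rmS(R),
\]
the adjoint counterpart to Stevenson's tensor intertwining $X\ast I_\gl Q_\rho(Y)\cong I_\gl Q_\rho(X\otimes^\rmL_R Y)$. Combined with \Cref{cor:rel int-hom is hom} (so that $[K_\infty(\mfm),-]$ coincides with $\Hom_R(K_\infty(\mfm),-)$), this would let me compute $R\Hom_R(K_\infty(\mfm), k(\mfp))$ in $\rmD(R)$ and transport the answer to $\rmS(R)$ via $I_\gl Q_\rho$.

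For the derived computation I would iterate, over the generators $f_1,\ldots,f_n$ of $\mfm$, the distinguished triangle
\[
R\Hom_R(R_{f_i},-) \to \Id \to R\Hom_R(K_\infty(f_i),-)
\]
coming contravariantly from $K_\infty(f_i)\to R\to R_{f_i}$. In case $\mfp\neq\mfm$, I would choose $f\in\mfm\setminus\mfp$; then $f$ acts invertibly on $k(\mfp)$, and since $R_f$ is flat, $R\Hom_R(R_f, k(\mfp))\cong k(\mfp)$ via evaluation at $1$, forcing $R\Hom_R(K_\infty(f), k(\mfp))=0$. Writing $K_\infty(\mfm)\cong K_\infty(f)\otimes^\rmL_R K_\infty(J)$ with $J$ generated by the remaining $f_i$'s and using iterated adjunction gives $R\Hom_R(K_\infty(\mfm), k(\mfp)) = R\Hom_R(K_\infty(J), 0) = 0$. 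In case $\mfp=\mfm$, each $f_i$ annihilates $k$; a nonzero $R$-linear map $R_{f_i}\to k$ would require $k$ to contain uniquely $f_i$-divisible elements, which is impossible when $f_i\cdot k=0$, so $\Hom_R(R_{f_i}, k)=0$ and higher Ext vanishes by flatness of $R_{f_i}$. The triangle then yields $R\Hom_R(K_\infty(f_i), k)\cong k$ at each step, and iterating over all $i$ gives $R\Hom_R(K_\infty(\mfm), k)\cong k$ (the Greenlees--May identity $L\Lambda_\mfm k=k$, since $k$ is already $\mfm$-adically complete). Applying $I_\gl Q_\rho$ through the compatibility recovers the stated isomorphisms.

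The main obstacle is rigorously establishing the internal-hom compatibility $[X, I_\gl Q_\rho(Y)]\cong I_\gl Q_\rho(R\Hom_R(X,Y))$. Unlike the tensor intertwining, $I_\gl Q_\rho$ does not visibly possess a right adjoint --- it is the composition of a left adjoint $I_\gl$ with a right adjoint $Q_\rho$ --- so a pure adjunction argument is unavailable. I would derive it at the chain level from \Cref{cor:rel int-hom is hom}: $\Hom_R(\wt{X}, Q_\rho Y)$ is a K-injective complex in $\rmK(\Inj R)$ representing $R\Hom_R(X,Y)$, and applying the recollement triangle $II_\gl Z\to Z\to Q_\gl Q Z$ with $Z=Q_\rho Y$ followed by $\Hom_R(\wt{X},-)$ should identify the acyclic part with $I_\gl Q_\rho(R\Hom_R(X,Y))$. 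If making this rigorous is too heavy inline, the $\mfp\neq\mfm$ case admits a direct argument in $\rmS(R)$ using only the tensor intertwining: $I_\gl Q_\rho(k(\mfp))$ is $R_f$-local for $f\in\mfm\setminus\mfp$, the Bousfield reflection for the smashing localization $R_f\ast-$ gives $[R_f,A]\cong A$ on $R_f$-local $A$ and hence $[K_\infty(f),A]=0$, and the tensor-hom adjunction $[X\otimes Y, A]\cong[Y,[X,A]]$ propagates this to $[K_\infty(\mfm), I_\gl Q_\rho(k(\mfp))]=0$.
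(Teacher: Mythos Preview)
Your alternative argument for $\mfp\neq\mfm$ is correct and essentially parallel to the paper's: both exploit that $I_\gl Q_\rho(k(\mfp))$ lives over the localization away from $\mfm$. The paper uses the adjunction $R_\mfp\otimes_R-\dashv\res$ together with $K_\infty(\mfm)\otimes_R R_\mfp=0$ in $\rmD(R)$; your version with the smashing localization $R_f\ast-$ for $f\in\mfm\setminus\mfp$ is a clean variant.

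The $\mfp=\mfm$ case, however, has a genuine gap. The compatibility $[X,I_\gl Q_\rho(Y)]\cong I_\gl Q_\rho(R\Hom_R(X,Y))$ is precisely the crux, and your chain-level sketch does not close it. Applying $\Hom_R(\wt{X},-)$ to the recollement triangle $Q_\gl QZ\to Z\to II_\gl Z$ with $Z=Q_\rho Y$, you correctly identify the middle term with $Q_\rho R\Hom_R(X,Y)$ and the right-hand term with $[X,I_\gl Q_\rho Y]$, but for the conclusion you need $\Hom_R(\wt{X},Q_\gl Y)\in\im Q_\gl={}^\perp\rmS(R)$. There is no apparent reason why $\Hom_R(\wt{X},-)$ should preserve the \emph{left} orthogonal of the acyclics in $\rmK(\Inj R)$: it is a right adjoint with no evident further right adjoint, so one cannot simply transport orthogonality. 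Since $K_\infty(\mfm)$ is not compact, the easy argument via dualizability (which does give the compatibility for perfect $X$) is unavailable, and an extension to all $X$ fails because $I_\gl$ need not preserve products. A minor additional point: ``higher Ext vanishes by flatness of $R_{f_i}$'' is not a valid justification---flatness controls $\mathrm{Tor}$, not $\Ext$; the correct argument for $R\Hom_R(R_{f_i},k)=0$ is the $R\lim$ computation over the tower $k\xleftarrow{0}k\xleftarrow{0}\cdots$.

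The paper avoids the compatibility question entirely by induction on $\dim R$. For $\dim R=0$ one has $K_\infty(\mfm)\cong R$ since all generators of $\mfm$ are nilpotent. For $\dim R>0$ one picks a regular element $x\in\mfm$, so that $R/(x)$ is local Gorenstein of smaller dimension, and reduces via the identity $K_\infty(\mfm/(x))\cong R/(x)\otimes_R K_\infty(\mfm)$ and the tensor--hom adjunction for $R\to R/(x)$, together with the fact that the stabilization functor (equivalently, Gorenstein-injective envelopes) commutes with restriction along $R\to R/(x)$. This inductive reduction is the missing idea in your approach.
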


\begin{proof}
We will first prove~\eqref{eq:hom-gm} for the case $\mfp=\mfm$, by induction on the Krull dimension of $R$. In the case $\dim R=0$, we have $\Spec R=\{\mfm\}$. Thus, $\mfm$ consists of nilpotent elements. Let $\mfm=(f_1,\ldots,f_n)$. Then each $f_i$ is nilpotent. Therefore, $R_{f_i}=0$ and so $K_\infty(\mfm)=\bigotimes_{i=1}^n K_\infty(f_i)=\bigotimes_{i=1}^n R\cong R$. We have $\Hom_R(K_\infty(\mfm),I_\gl Q_\rho(k))\cong \Hom_R(R,I_\gl Q_\rho (k))\cong I_\gl Q_\rho (k)$.

Now let $d>0$ and assume that~\eqref{eq:hom-gm} (for $\mfp=\mfm$) holds for all local Gorenstein rings of dimension strictly less than $d$ and suppose that $\dim R=d$. Let $x\in \mfm$ be a regular element (such an element exists by the prime avoidance lemma and our assumption that $\dim R=d>0$). Then $R/(x)$ is a local Gorenstein ring with residue field $(R/(x))/(\mfm/(x))\cong R/\mfm=k$ and $\dim R/(x)=d-1$. By~\Cref{lem:koszul-quotient}, $K_\infty(\mfm/(x))\cong R/(x)\ot_R K_\infty(\mfm)$. We have
\begin{align*}
\Hom_R(K_\infty(\mfm),I_\gl Q_\rho^R (k))&\cong\Hom_R(K_\infty(\mfm),\res I_\gl Q_\rho^{R/(x)} (k))\\
&\cong\res \Hom_{R/(x)}(R/(x)\ot_R K_\infty(\mfm),I_\gl Q_\rho^{R/(x)}(k))\\
&\cong\res \Hom_{R/(x)}(K_\infty(\mfm/(x)),I_\gl Q_\rho^{R/(x)}(k))\\
&\cong\res I_\gl Q_\rho^{R/(x)}(k) \\
&\cong I_\gl Q_\rho^R(k).
\end{align*}
The first and last isomorphisms hold because the singularity category of a Gorenstein ring is equivalent to the stable category of Gorenstein-injective modules and $\rmZ^0 I_\gl Q_\rho=G(-)$ on modules (see~\Cref{rec:ginj-sing-equiv}) and by~\cite[Remark 6.11]{Stevenson14}, taking Gorenstein-injective envelopes commutes with restriction. The second isomorphism follows from the internal-hom version of the adjunction
\[
\begin{tikzcd}[column sep=5em]
\rmK(R) \rar["R/(x)\ot_R -",shift left=1.5ex] \rar[phantom,"\perp"] & \rmK(R/(x)), \lar["\res",shift left=1.5ex]
\end{tikzcd}
\]
which asserts that the functors
\[
\res \Hom_{R/(x)}(R/(x)\ot_R -,-),\, \Hom_R(-,\res(-))\colon \rmK(R)^\rmop \times \rmK(R/(x))\to \rmK(R)
\]
are naturally isomorphic. The fourth isomorphism holds by the inductive hypothesis. This completes the proof of~\eqref{eq:hom-gm} for the case $\mfp=\mfm$.

Let $\mfp\neq \mfm$ be a prime ideal of $R$. Then necessarily $\mfp\subsetneq \mfm$ since $\mfm$ contains all prime ideals. We have
\begin{align*}
\Hom_R(K_\infty(\mfm),I_\gl Q_\rho^R(k(\mfp)))&\cong \Hom_R(K_\infty(\mfm),\res I_\gl Q_\rho^{R_\mfp}(k(\mfp)))\\
&\cong\res \Hom_{R_\mfp}(R_\mfp\ot_R K_\infty(\mfm),I_\gl Q_\rho^{R_\mfp}(k(\mfp)))\\
&=0.
\end{align*}
The first two isomorphisms are justified in the same way as in the calculation in the previous paragraph, replacing $R/(x)$ with $R_\mfp$. Since $\mfp \subsetneq \mfm$, it follows that $\loc(K_\infty(\mfm))=\scM \subsetneq \scP=\Ker (R_\mfp \ot_R -)$, where $\scP$ and $\scM$ are the smashing subcategories of $\rmD(R)$ corresponding to $\mfp$ and~$\mfm$, respectively. Hence, $K_\infty(\mfm)\ot_R R_\mfp=0$. This explains the last equality, completing the proof.
\end{proof}

The following lemma is well-known and easy to prove. We present it for the convenience of the reader.
\begin{lem}\label{lem:image-of-coloc}%
Let $G\colon \scT \to \scU$ be a product-preserving triangulated functor between triangulated categories with products and let $\scX$ be a collection of objects of $\scT$. Then $G\coloc(\scX)\subseteq \coloc(G\scX)$.
\end{lem}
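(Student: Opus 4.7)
The plan is to run the standard ``pull-back the target subcategory'' argument: define
\[
\scY=\{X\in \scT \mid GX \in \coloc(G\scX)\},
\]
show that $\scY$ is itself a colocalizing subcategory of $\scT$ containing $\scX$, and then invoke the minimality of $\coloc(\scX)$ to deduce $\coloc(\scX)\subseteq \scY$, which is exactly the desired inclusion $G\coloc(\scX)\subseteq \coloc(G\scX)$.

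The verification that $\scY$ is colocalizing is a routine check, and this is where the two hypotheses on $G$ are used. First, because $G$ is triangulated, if $X\to Y\to Z\to \gS X$ is a triangle in $\scT$ with two of $GX, GY, GZ$ in $\coloc(G\scX)$, then so is the third, since $\coloc(G\scX)$ is thick; analogously, $\scY$ is closed under shifts and direct summands because $\coloc(G\scX)$ is. Second, because $G$ preserves products, for any family $\{X_i\}_{i\in I}$ of objects of $\scY$ we have $G(\prod_i X_i)\cong \prod_i GX_i \in \coloc(G\scX)$, so $\prod_i X_i \in \scY$. Hence $\scY$ is a thick, product-closed subcategory of $\scT$, i.e.\ a colocalizing subcategory.

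Containment $\scX\subseteq \scY$ is immediate from the definitions: for $X\in \scX$, $GX\in G\scX\subseteq \coloc(G\scX)$. Combining this with the minimality of $\coloc(\scX)$ as the smallest colocalizing subcategory of $\scT$ containing $\scX$, we conclude $\coloc(\scX)\subseteq \scY$, which unravels to $G(\coloc(\scX))\subseteq \coloc(G\scX)$.

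There is really no serious obstacle here; the argument is essentially formal, and the only content is the observation that the two closure conditions defining a colocalizing subcategory (thickness and closure under products) are preserved under taking preimages along a triangulated, product-preserving functor. The analogous statement for localizing subcategories and coproduct-preserving functors is proved by exactly the same template, which is why the authors label the result as well-known.
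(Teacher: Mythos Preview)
Your proof is correct and follows exactly the same approach as the paper: define the preimage subcategory $\scY=\{X\in \scT \mid GX\in \coloc(G\scX)\}$, verify it is colocalizing and contains $\scX$, and conclude by minimality of $\coloc(\scX)$. The paper's proof simply asserts that the verification of $\scY$ being colocalizing is straightforward, whereas you spell out the details, but the argument is the same.
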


\begin{proof}
It is straightforward to verify that $\scL=\{X\in \scT\mid GX\in \coloc(G\scX)\}$ is a colocalizing subcategory of $\scT$ that contains $\scX$. It follows that $\coloc(\scX)\subseteq \scL$, which proves the statement.
\end{proof}

\begin{rec}\label{rec:perfect-cogen}%
Let $\scT$ be a compactly generated triangulated category. If $\scX$ is a cogenerating set of objects of $\scT$ (in the sense that $^\perp \scX=0$) that consists of pure-injective objects, then $\scT=\coloc(\scX)$, which means that the smallest colocalizing subcategory of $\scT$ that contains $\scX$ is $\scT$. This holds because the pure-injective objects of $\scT$ form a perfect cogenerating set in the sense of~\cite{Krause02}; see~\cite[Section 3]{Verasdanis23a} for some explanations. Also,~\cite[Section 9]{BarthelCastellanaHeardSanders23} provides a detailed account on the concept of perfect (co)generation.
\end{rec}

\begin{prop}\label{prop:sing-cogen}%
Let $R$ be a locally hypersurface ring. Then
\[
\rmS(R)=\coloc(I_\gl Q_\rho(k(\mfp)) \mid \mfp \in \Sing R).
\]
\end{prop}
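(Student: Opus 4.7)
The plan is to invoke Recollection~\ref{rec:perfect-cogen} on the set $\scX = \{I_\gl Q_\rho^R(k(\mfp)) : \mfp \in \Sing R\}$. Since a locally hypersurface ring is Gorenstein, every member of $\scX$ is pure-injective by Proposition~\ref{prop:residue-pinj}. Thus it suffices to show that $\scX$ cogenerates $\rmS(R)$, i.e., that any $A \in \rmS(R)$ with $\Hom_{\rmS(R)}(A, \gS^n I_\gl Q_\rho^R(k(\mfp))) = 0$ for every $n \in \bbZ$ and every $\mfp \in \Sing R$ must be zero.

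Fix such an $A$ and an arbitrary $\mfp \in \Sing R$. The adjunction $R_\mfp \ot_R - \dashv \res$ from Remark~\ref{rem:res-preserves-pinj}, together with the identification $I_\gl Q_\rho^R(k(\mfp)) \cong \res I_\gl Q_\rho^{R_\mfp}(k(\mfp))$ (cf.\ the proof of Proposition~\ref{prop:residue-pinj}), transfers the vanishing to
\[
\Hom_{\rmS(R_\mfp)}(R_\mfp \ot_R A, \gS^n I_\gl Q_\rho^{R_\mfp}(k(\mfp))) = 0 \quad \text{for every } n \in \bbZ.
\]
Now the action adjunction $g_{\mfp R_\mfp} \ast - \dashv [g_{\mfp R_\mfp}, -]$ on $\rmS(R_\mfp)$, combined with Lemma~\ref{lem:hom-koszul} applied to the local Gorenstein ring $R_\mfp$ (yielding $[g_{\mfp R_\mfp}, I_\gl Q_\rho^{R_\mfp}(k(\mfp))] \cong I_\gl Q_\rho^{R_\mfp}(k(\mfp))$), shows that $Y := g_{\mfp R_\mfp} \ast (R_\mfp \ot_R A)$ likewise has vanishing Homs to all shifts of $I_\gl Q_\rho^{R_\mfp}(k(\mfp))$.

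By Stevenson's classification~\cite{Stevenson14} of localizing subcategories for the local hypersurface $R_\mfp$, the subcategory $\Loc(I_\gl Q_\rho^{R_\mfp}(k(\mfp))) \subseteq \rmS(R_\mfp)$ is precisely the one with Balmer--Favi support concentrated at the maximal ideal $\mfp R_\mfp$, so $Y$ lies therein. In this compactly generated subcategory, $I_\gl Q_\rho^{R_\mfp}(k(\mfp))$ serves as a compact endofinite pure-injective cogenerator (Propositions~\ref{prop:residue field endofinite} and~\ref{prop:residue-pinj}): its restricted Yoneda image is an injective cogenerator of the associated subcategory of $\Mod(\rmS(R_\mfp)^\rmc)$, so $\Hom(Y,\gS^n E) = 0$ for all $n$ forces $Y = 0$. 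Restricting back, $g_\mfp \ast A \cong \res Y = 0$ in $\rmS(R)$ for every $\mfp \in \Sing R$, and Stevenson's stratification of $\rmS(R)$ via the action of $\rmD(R)$ forces $A = 0$, completing the cogeneration verification.

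The main obstacle is the final local cogeneration step: deducing $Y = 0$ from the vanishing of $\Hom(Y, \gS^n E)$ for all $n$, where $Y \in \Loc(E)$ and $E$ is a pure-injective endofinite compact generator. Compact generation alone does not imply cogeneration of a triangulated subcategory, but here the endofiniteness of $E$---through the indecomposable injective structure of $\wh{E}$ provided by Krause's theory---bridges the gap. The two-step reduction (first from $R$ to $R_\mfp$, then via the action to the subcategory supported at the maximal ideal) is designed precisely to reach a setting where this endofinite cogeneration is applicable.
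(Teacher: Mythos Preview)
Your local cogeneration step is not justified as written. You assert that $E=I_\gl Q_\rho^{R_\mfp}(k(\mfp))$ cogenerates $\Loc(E)$ because it is compact, endofinite, and pure-injective, and that $\wh{E}$ is an injective \emph{cogenerator} of the relevant functor subcategory. But Propositions~\ref{prop:residue field endofinite} and~\ref{prop:residue-pinj} only give that $\wh{E}$ is injective; nothing cited establishes that it cogenerates. A compact pure-injective generator of a compactly generated triangulated category need not be a cogenerator, and endofiniteness by itself does not close this gap---you yourself flag this as the ``main obstacle,'' and the sentence invoking Krause's theory does not actually discharge it. So the implication $\Hom(Y,\gS^nE)=0\Rightarrow Y=0$ remains open.

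The step can be repaired, but via \emph{minimality} rather than endofiniteness: Stevenson's stratification for the local hypersurface $R_\mfp$ says the localizing subcategory supported at the closed point is minimal, so ${}^\perp\{\gS^nE\mid n\in\bbZ\}\cap\Loc(E)$ is either $0$ or all of $\Loc(E)$, and the latter would force $\End_{\rmS(R_\mfp)}(E)=0$. Once you grant yourself that, however, your two-step reduction is redundant: you already invoke Stevenson's global stratification in the last line to pass from $g_\mfp\ast A=0$ for all $\mfp$ to $A=0$. The paper simply applies \cite[Theorem~6.13]{Stevenson14} once and directly: $\scL={}^\perp\{\gS^nI_\gl Q_\rho(k(\mfp))\mid n\in\bbZ,\ \mfp\in\Sing R\}$ is a localizing subcategory of $\rmS(R)$ containing no $I_\gl Q_\rho(k(\mfp))$ (else $\End(I_\gl Q_\rho(k(\mfp)))=0$), hence $\scL=0$. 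Same Stevenson input, used once, with no detour and no gap.
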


\begin{proof}
Let $\scL={^\perp \{\gS^n I_\gl Q_\gr (k(\mfp)) \mid n\in \bbZ,\, \mfp \in \Sing R\}}$. Then $\scL$ is a localizing subcategory of $\rmS(R)$ that does not contain any $I_\gl Q_\gr (k(\mfp))$, for $\mfp \in \Sing R$. Indeed, if $I_\gl Q_\rho (k(\mfp))\in \scL$, then $\Hom_{\rmS(R)}(I_\gl Q_\rho k(\mfp),I_\gl Q_\rho k(\mfp))=0$ and this implies that $I_\gl Q_\rho (k(\mfp))=0$, which is false when $\mfp \in \Sing R$. Consequently, by~\cite[Theorem 6.13]{Stevenson14}, we have $\scL=0$, i.e., $\{\gS^n I_\gl Q_\gr (k(\mfp)) \mid n\in \bbZ,\, \mfp\in \Sing R\}$ is a cogenerating set for $\rmS(R)$. Since, by~\Cref{prop:residue-pinj}, the objects $I_\gl Q_\gr (k(\mfp))$ are pure-injective, it follows that $\rmS(R)=\coloc(I_\gl Q_\gr (k(\mfp)) \mid \mfp \in \Sing R)$; see~\Cref{rec:perfect-cogen}.
\end{proof}

\begin{prop}\label{prop:stalk-maximal}%
Let $R=(R,\mfm,k)$ be a hypersurface ring. Then $[g_\mfm,\rmS(R)]=\coloc(I_\gl Q_\rho (k))$.
\end{prop}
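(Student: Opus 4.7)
The plan is to prove the equality by two opposite inclusions, leveraging that $R$ is local so $g_\mfm=K_\infty(\mfm)$ is a tensor-idempotent in $\rmD(R)$ (i.e., $g_\mfm\ot^\rmL_R g_\mfm\cong g_\mfm$). By adjunction this makes $[g_\mfm,-]$ an idempotent endofunctor of $\rmS(R)$, and \Cref{lem:hom-koszul} supplies the needed concrete computations of values on the residue-field-type generators.

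For the forward inclusion $\coloc(I_\gl Q_\rho(k))\subseteq [g_\mfm,\rmS(R)]$, I would first apply \Cref{lem:hom-koszul} in the case $\mfp=\mfm$, which (since $k(\mfm)=k$) gives $[g_\mfm,I_\gl Q_\rho(k)]\cong I_\gl Q_\rho(k)$. Thus $I_\gl Q_\rho(k)$ lies in the essential image of $[g_\mfm,-]$, and it then remains to verify that this essential image is a colocalizing subcategory. For this, the augmentation $g_\mfm\to R$ in $\rmD(R)$ (coming from the idempotent triangle $g_\mfm\to R\to f_\mfm\to \gS g_\mfm$ with $g_\mfm\ot^\rmL_R f_\mfm=0$) induces via adjunction a natural transformation $\gh\colon \Id_{\rmS(R)}\to [g_\mfm,-]$. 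Using the idempotence $[g_\mfm,[g_\mfm,A]]\cong [g_\mfm,A]$, the essential image of $[g_\mfm,-]$ is identified with the full subcategory $\{A\in \rmS(R):\gh_A \text{ is an isomorphism}\}$, which is closed under products (since $[g_\mfm,-]$ preserves products as a right adjoint), shifts, summands, and triangles by a routine triangulated-category argument.

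For the reverse inclusion $[g_\mfm,\rmS(R)]\subseteq \coloc(I_\gl Q_\rho(k))$, fix $A\in \rmS(R)$. Since a hypersurface ring is in particular locally hypersurface, \Cref{prop:sing-cogen} gives $A\in \coloc(I_\gl Q_\rho(k(\mfp))\mid \mfp\in \Sing R)$. Since $[g_\mfm,-]$ is a product-preserving triangulated functor, \Cref{lem:image-of-coloc} yields $[g_\mfm,A]\in \coloc([g_\mfm,I_\gl Q_\rho(k(\mfp))]\mid \mfp\in \Sing R)$. By \Cref{lem:hom-koszul}, the term $[g_\mfm,I_\gl Q_\rho(k(\mfp))]$ equals $I_\gl Q_\rho(k)$ when $\mfp=\mfm$ and vanishes when $\mfp\neq \mfm$, whence $[g_\mfm,A]\in \coloc(I_\gl Q_\rho(k))$.

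The main obstacle is verifying that the essential image $[g_\mfm,\rmS(R)]$ is a colocalizing subcategory; setting up the natural transformation $\gh$ from the idempotent triangle and checking closure under summands via $\gh_A\oplus \gh_B$ is the key technical step. Once that is in place, the two inclusions are immediate consequences of the already-established \Cref{lem:hom-koszul}, \Cref{lem:image-of-coloc}, and \Cref{prop:sing-cogen}.
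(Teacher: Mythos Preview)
Your proposal is correct and follows essentially the same route as the paper. The paper's proof is a one-liner: since $\rmS(R)=\coloc(I_\gl Q_\rho(k(\mfp))\mid \mfp\in\Sing R)$ by \Cref{prop:sing-cogen} and $[g_\mfm,-]$ is product-preserving and triangulated, \Cref{lem:image-of-coloc} and \Cref{lem:hom-koszul} give
\[
[g_\mfm,\rmS(R)]=\coloc\bigl([g_\mfm,I_\gl Q_\rho(k(\mfp))]\mid \mfp\in\Sing R\bigr)=\coloc(I_\gl Q_\rho(k)).
\]
The only difference is emphasis: you treat the fact that the essential image $[g_\mfm,\rmS(R)]$ is a colocalizing subcategory as the ``main obstacle'' and sketch an argument via the idempotent triangle, whereas the paper uses this silently (it is a standard consequence of $g_\mfm$ being a left tensor-idempotent, so that $[g_\mfm,-]$ is an idempotent product-preserving triangulated endofunctor; cf.\ the remark in the proof of \Cref{lem:image-under-hom}). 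Your reverse inclusion is verbatim the paper's argument, and your forward inclusion is exactly what is needed to turn the paper's displayed $\subseteq$ from \Cref{lem:image-of-coloc} into an equality.
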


\begin{proof}
Since $[g_\mfm,-]=\Hom_R(g_\mfm,-)\colon \rmS(R)\to \rmS(R)$ is a product-preserving triangulated functor, it follows by~\Cref{lem:image-of-coloc} and~\Cref{prop:sing-cogen} that $[g_\mfm,\rmS(R)]=\coloc([g_\mfm,I_\gl Q_\gr (k(\mfp))] \mid \mfp \in \Sing R)=\coloc(I_\gl Q_\rho (k))$, with the last equality due to~\Cref{lem:hom-koszul}.
\end{proof}

\section{Locally hypersurface rings}
\label{sec:main}%
In this section, we prove~\Cref{thm:hyper-costrat}, which classifies the colocalizing subcategories of the singularity category $\rmS(R)$ of a locally hypersurface ring $R$ in terms of the singular locus $\Sing R$.
\subsection{Cosupport and costratification}
Let $A$ be an object of $\rmS(R)$. The \emph{cosupport} of $A$ is $\Cosupp(A)=\{\mfp \in \Spec R \mid \Hom_R(g_\mfp,A)\neq 0\}$.
\begin{lem}\label{lem:cosupport-properties}%
The assignment $\Cosupp\colon \Ob \rmS(R) \to \scP(\Spec R)$ satisfies the following properties:
\begin{enumerate}[\rm(a)]
\item
$\Cosupp(0)=\varnothing$.
\item
$\Cosupp(\prod A_i)=\bigcup \Cosupp(A_i)$.
\item
$\Cosupp(\gS A)=\Cosupp(A)$.
\item
$\Cosupp(A)\subseteq \Cosupp(B) \cup \Cosupp(C)$, for all triangles $A\to B\to C$ of $\rmS(R)$.
\item
$\Cosupp([X,A])\subseteq \Supp(X) \cap \Cosupp(A)$.
\end{enumerate}
\end{lem}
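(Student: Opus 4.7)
The plan is that (a)--(d) follow formally from the fact that, for each $\mfp \in \Spec R$, the functor $[g_\mfp, -] \colon \rmS(R) \to \rmS(R)$ is a triangulated functor that preserves products (being a right adjoint). The substance lies in (e), which requires isolating how the relative internal-hom interacts with the action.

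For the formal parts: (a) is immediate since $[g_\mfp, 0] = 0$. For (b), the right adjoint $[g_\mfp,-]$ preserves products, so $[g_\mfp, \prod_i A_i] \cong \prod_i [g_\mfp, A_i]$, and such a product vanishes iff every factor does. For (c), $[g_\mfp, \gS A] \cong \gS [g_\mfp, A]$ is zero iff $[g_\mfp, A]$ is zero. For (d), applying the triangulated functor $[g_\mfp, -]$ to a triangle $A \to B \to C$ produces a triangle in $\rmS(R)$; if both $[g_\mfp, B]$ and $[g_\mfp, C]$ vanish, the usual rotation/long-exact-sequence argument forces $[g_\mfp, A] = 0$ as well, whence $\mfp \notin \Cosupp(A)$.

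The main step is (e). The plan is to first establish the chain of natural isomorphisms of functors $\rmS(R) \to \rmS(R)$:
\[
[g_\mfp, [X, -]] \;\cong\; [X \ot^\rmL_R g_\mfp, -] \;\cong\; [X, [g_\mfp, -]].
\]
The first isomorphism follows from the associativity of the action in~\Cref{rec:action}, which gives $(X \ot^\rmL_R g_\mfp) \ast - \cong X \ast (g_\mfp \ast -)$, combined with uniqueness of right adjoints. The second isomorphism uses the symmetric-monoidal structure on $\rmD(R)$, namely $X \ot^\rmL_R g_\mfp \cong g_\mfp \ot^\rmL_R X$, together with the same adjoint argument with the roles of $X$ and $g_\mfp$ interchanged. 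With these identifications in hand: if $\mfp \notin \Supp(X)$ then $g_\mfp \ot^\rmL_R X = 0$ in $\rmD(R)$, so the middle term vanishes and $[g_\mfp, [X, A]] = 0$; if $\mfp \notin \Cosupp(A)$ then $[g_\mfp, A] = 0$, so $[X, [g_\mfp, A]] = 0$ and the rightmost expression vanishes. Either way, $\mfp \notin \Cosupp([X, A])$, which is precisely the inclusion claimed in (e).

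I do not expect a genuine obstacle here, since every step is formal once~\Cref{rec:action} is available; the only bookkeeping point is that the uniqueness-of-adjoints argument in (e) must be performed at the level of functors on $\rmS(R)$ rather than objectwise, so that one obtains a bona fide natural isomorphism usable for the vanishing arguments.
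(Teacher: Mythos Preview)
Your proof is correct. The paper does not give its own argument here but simply cites \cite[Lemma 3.12]{Verasdanis23a}; the formal proof you wrote out---using that each $[g_\mfp,-]$ is a product-preserving triangulated functor for (a)--(d), and the adjoint calculus $[g_\mfp,[X,-]]\cong[X\ot_R^\rmL g_\mfp,-]\cong[X,[g_\mfp,-]]$ for (e)---is exactly the standard argument one finds in that reference, specialized to the action of $\rmD(R)$ on $\rmS(R)$.
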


\begin{proof}
See~\cite[Lemma 3.12]{Verasdanis23a}.
\end{proof}

\begin{rem}
Recall that by~\Cref{lem:all-are-hom-submodules}, all colocalizing subcategories of $\rmS(R)$ are $\shom$-submodules, in the sense of~\cite{Verasdanis23a}, and so all results concerning colocalizing $\shom$-submodules apply to all colocalizing subcategories of $\rmS(R)$.
\end{rem}

We denote by $I$ the product of the Brown--Comenetz duals of the compact objects of $\rmS(R)$. Then $I$ is a pure-injective cogenerator of $\rmS(R)$; see~\Cref{rec:perfect-cogen}.

We define the maps
\[
\begin{tikzcd}
\Coloc(\rmS(R)) \rar["\sigma",shift left] & \scP(\Spec R){,} \lar["\tau",shift left]
\end{tikzcd}
\]
where $\sigma(\scC)=\bigcup_{A\in \scC}\Cosupp(A)$ and $\tau(W)=\{A\in \rmS(R) \mid \Cosupp(A)\subseteq W\}$. The maps $\sigma$ and $\tau$ are inclusion-preserving. By~\cite[Section 3.B]{Verasdanis23a} and~\cite[Proposition 5.7]{Stevenson14}, it follows that $\sigma(\rmS(R))=\Cosupp(I)=\Sing R$. Hence, $\sigma(\scC)\subseteq \sigma(\rmS(R))=\Sing R$. This shows that $\sigma\colon \Coloc(\rmS(R))\to \scP(\Sing R)$ is well-defined. From now on, we will consider the codomain of $\sigma$ and the domain of $\tau$ to be $\scP(\Sing R)$. It holds that $\sigma \circ \tau = \Id$; see~\cite[Lemma 3.21]{Verasdanis23a}.

\begin{defn}[{\cite[Definition 3.13]{Verasdanis23a}}]
\label{defn:costrat}%
If $\sigma$ and $\tau$ are bijections inverse to each other (which holds when $\tau \circ \sigma=\Id$) then we say that $\rmS(R)$ is \emph{costratified}.
\end{defn}

Our goal is to prove that if $R$ is a locally hypersurface ring, then $\rmS(R)$ is costratified. For that we will use an equivalent characterization of costratification in terms of certain colocalizing subcategories.
\begin{defn}[{\cite[Definition 3.16]{Verasdanis23a}}]
\label{defn:coltg-comin}%
$\phantom{}$
\begin{enumerate}[\rm(a)]
\item
$\rmS(R)$ satisfies the \emph{colocal-to-global principle} if
\[
\coloc(A)=\coloc(\Hom_R(g_\mfp,A)\mid \mfp \in \Sing R),\, \forall A\in \rmS(R).
\]
\item
$\rmS(R)$ satisfies \emph{cominimality} if $\coloc(\Hom_R(g_\mfp,I))$ is a minimal colocalizing subcategory of $\rmS(R),\, \forall \mfp \in \Sing R$.
\end{enumerate}
\end{defn}

According to~\cite[Theorem 3.22]{Verasdanis23a}, $\rmS(R)$ is costratified if and only if $\rmS(R)$ satisfies the colocal-to-global principle and cominimality. This will allow us to prove that if $R$ is a hypersurface ring, then $\rmS(R)$ is costratified. Proving costratification of $\rmS(R)$ when $R$ is a locally hypersurface ring amounts to reducing costratification to certain smashing localizations, as described in~\cite[Theorem 5.4]{Verasdanis23a}.

\subsection{Colocalizing subcategories of $\rmS(R)$, for a locally hypersurface ring $R$}
\label{subsec:main}%
\begin{lem}\label{lem:image-under-hom}%
Let $R=(R,\mfm,k)$ be a local Gorenstein ring and let $x\in R$ be a regular element. Let $G$ be a non-zero object of $[g_\mfm,\rmS(R)]$ and set $\wt{M}=\Hom_R(R/(x),G)$ viewed as a complex of $R/(x)$-modules. Then $\wt{M}\in [g_{\mfm/(x)},\rmS(R/(x))]$ and $\wt{M}\neq 0$.
\end{lem}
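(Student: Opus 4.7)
The plan is to treat the two assertions separately. For the membership of $\wt M$ in $[g_{\mfm/(x)}, \rmS(R/(x))]$, I will combine tensor--hom with the module-level adjunction $\res \dashv \Hom_R(R/(x),-)$ to rewrite $\wt M$ in a form that Corollary \ref{cor:rel int-hom is hom}, applied over $R/(x)$, recognises as a value of $[g_{\mfm/(x)},-]$. For non-vanishing, I will reinterpret the condition $\wt M = 0$ as invertibility of multiplication by $x$ on $G$ in $\rmS(R)$ via a Koszul-style triangle, and obtain a contradiction by pitting the hypothesis $G \in [g_\mfm, \rmS(R)]$ against the vanishing $R_x \ot^\rmL_R g_\mfm = 0$ in $\rmD(R)$.

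For Part 1, I first check that $\wt M \in \rmS(R/(x))$. Each term $\Hom_R(R/(x), G^n)$ is injective over $R/(x)$ because $\Hom_R(R/(x),-)$ is right adjoint to the exact functor $\res$. Applying the exact functor $\Hom_R(-, G^n)$ to $0 \to R \xrightarrow{\cdot x} R \to R/(x) \to 0$ produces a short exact sequence of complexes
\[
0 \to \wt M \to G \xrightarrow{\cdot x} G \to 0,
\]
and the long exact sequence in cohomology together with $G$ acyclic gives acyclicity of $\wt M$. Since $R$ is local at $\mfm$, $g_\mfm = K_\infty(\mfm)$, so by Corollary \ref{cor:rel int-hom is hom} we can write $G = \Hom_R(K_\infty(\mfm), A)$ for some $A \in \rmS(R)$. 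Using tensor--hom and then $\res \dashv \Hom_R(R/(x),-)$,
\[
\wt M \cong \Hom_R(R/(x) \ot_R K_\infty(\mfm), A) \cong \Hom_{R/(x)}(R/(x) \ot_R K_\infty(\mfm), \Hom_R(R/(x), A)).
\]
By Lemma \ref{lem:koszul-quotient}, $R/(x) \ot_R K_\infty(\mfm) \cong K_\infty(\mfm/(x)) = g_{\mfm/(x)}$, a bounded complex of flat $R/(x)$-modules and hence K-flat. Since $\Hom_R(R/(x), A) \in \rmS(R/(x))$ by the same verification as for $\wt M$, a further application of Corollary \ref{cor:rel int-hom is hom} over $R/(x)$ identifies $\wt M \cong [g_{\mfm/(x)}, \Hom_R(R/(x), A)]_{R/(x)}$, placing $\wt M$ in $[g_{\mfm/(x)}, \rmS(R/(x))]$.

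For Part 2, the short exact sequence of Part 1 yields a distinguished triangle $\wt M \to G \xrightarrow{\cdot x} G \to \wt M[1]$ in $\rmK(R)$. Because each term of $\wt M$ is an $R/(x)$-module, an $R$-linear null-homotopy of $\wt M$ is automatically $R/(x)$-linear, so $\wt M = 0$ in $\rmS(R/(x))$ iff $\wt M = 0$ in $\rmK(R)$ iff $\cdot x\colon G \to G$ is an isomorphism in $\rmS(R)$. Assume the latter for contradiction. Since $R_x \cong \hocolim(R \xrightarrow{\cdot x} R \xrightarrow{\cdot x} \cdots)$ in $\rmD(R)$ and $- \ast G$ preserves homotopy colimits, the natural map $G \to R_x \ast G$ becomes an isomorphism. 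On the other hand, with $G = [g_\mfm, A]$, the identity $[X, [Y, A]] \cong [Y \ot^\rmL_R X, A]$ coming from the defining adjunction of $[-,-]$ gives $[R_x, G] \cong [g_\mfm \ot^\rmL_R R_x, A]$. Because $x \in \mfm$ is invertible in $R_x$, $\mfm R_x = R_x$, and hence $K_\infty(\mfm) \ot_R R_x \cong K_\infty(\mfm R_x) \cong 0$ in $\rmD(R)$ (the stable Koszul complex of the unit ideal is acyclic); thus $[R_x, G] = 0$. Finally, the adjunction $R_x \ast - \dashv [R_x, -]$ gives
\[
\End_{\rmS(R)}(G) \cong \Hom_{\rmS(R)}(R_x \ast G, G) \cong \Hom_{\rmS(R)}(G, [R_x, G]) = 0,
\]
forcing $G = 0$ and contradicting the hypothesis $G \neq 0$.

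The hard part is Part 2: one must carefully track objects across $\rmS(R/(x))$, $\rmK(R)$, and $\rmS(R)$ to reinterpret the vanishing of $\wt M$ as invertibility of $\cdot x$ on $G$, and then combine the action's compatibility with homotopy colimits with the closed-point hypothesis $G \in [g_\mfm, \rmS(R)]$ to exploit $R_x \ot^\rmL_R g_\mfm = 0$.
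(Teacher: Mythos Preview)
Your proof is correct. Part~1 is essentially the paper's argument: both rely on Lemma~\ref{lem:koszul-quotient} and the same adjunctions, differing only in bookkeeping (the paper verifies $[g_{\mfm/(x)},\wt M]\cong\wt M$ using the idempotence $[g_\mfm,G]\cong G$, whereas you exhibit $\wt M$ directly as a value of $[g_{\mfm/(x)},-]$).

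Part~2 is where you genuinely diverge. The paper passes through the equivalence $\rmS(R)\simeq\sginj R$ of Recollection~\ref{rec:ginj-sing-equiv} and invokes \cite[Lemma~6.6]{Stevenson14}, which asserts that $\Hom_R(R/(x),G')$ has infinite injective dimension over $R$ for the module $G'$ corresponding to $G$; this is quick but imports a module-theoretic result from outside the action framework. Your argument stays entirely inside the action: the degreewise-split short exact sequence gives a triangle identifying $\wt M$ with the fiber of $x\colon G\to G$, so $\wt M=0$ forces $x$ to act invertibly, and then the vanishing $g_\mfm\ot^{\rmL}_R R_x=0$ in $\rmD(R)$ combined with $G\cong R_x\ast G$ and the adjunction collapses $\End_{\rmS(R)}(G)$ to zero. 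This buys self-containment and avoids the Gorenstein-injective translation, at the price of the homotopy-colimit bookkeeping. A mild streamlining: once $x$ is invertible on $G$ you have $[K(x),G]=0$ in $\rmS(R)$, so the localizing subcategory $\{X\in\rmD(R):[X,G]=0\}$ contains $K(x)$ and hence $g_\mfm=K_\infty(\mfm)\in\loc(K(x))$, giving $G\cong[g_\mfm,G]=0$ without the hocolim step.
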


\begin{proof}
Since $g_\mfm=K_\infty(\mfm)$ is a left idempotent, it holds that $[g_\mfm,\rmS(R)]=\im [g_\mfm,-]$. Therefore, $\Hom_R(g_\mfm,G)=[g_\mfm,G]\cong G$. We have
\begin{align*}
\res [g_{\mfm/(x)},\wt{M}]&=\res \Hom_{R/(x)}(g_\mfm \ot_R R/(x),\wt{M})\\
&\cong \Hom_R(g_\mfm,\res \wt{M}) \\
&=\Hom_R(g_\mfm,\Hom_R(R/(x),G))\\
& \cong \Hom_R(g_\mfm\ot_R R/(x),G) \\
& \cong \Hom_R(R/(x),\Hom_R(g_\mfm,G)) \\
& \cong \Hom_R(R/(x),G) \\
&=\res \wt{M}.
\end{align*}
Hence, $[g_{\mfm/(x)},\wt{M}]\cong \wt{M}$. This shows that $\wt{M}\in \im[g_{\mfm/(x)},-]=[g_{\mfm/(x)},\rmS(R/(x))]$. By the equivalence between $\rmS(R)$ and $\sginj R$, as described in~\Cref{rec:ginj-sing-equiv}, if $G'$ is the object of $\sginj R$ corresponding to $G$, then $\pd_R \Hom_R(R/(x),G')=\mathrm{id}_R \Hom_R(R/(x),G')=\infty$; see~\cite[Lemma 6.6]{Stevenson14}. Thus, $\Hom_R(R/(x),G')\neq 0$ and so $\Hom_R(R/(x),G)\neq 0$. Since $\Hom_R(R/(x),G)$ is the restriction of $\wt{M}$, we conclude that $\wt{M}\neq 0$.
\end{proof}

A compactly generated triangulated category $\scT$ is called \emph{pure-semisimple} if every object of $\scT$ is pure-injective.

\begin{lem}\label{lem:ps-cg-coloc}%
Let $\scT$ be a pure-semisimple triangulated category such that the only localizing subcategories of $\scT$ are $0$ and $\scT$. Then the only colocalizing subcategories of $\scT$ are $0$ and $\scT$.
\end{lem}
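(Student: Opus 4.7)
The plan is to show that under the pure-semisimplicity assumption, every colocalizing subcategory of $\scT$ is automatically localizing; the hypothesis on localizing subcategories then immediately gives the conclusion. The linchpin is the observation that in a pure-semisimple compactly generated triangulated category, every coproduct is a direct summand of the corresponding product, after which closure under products upgrades to closure under coproducts.

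First I would record the following fact, which holds in any compactly generated triangulated category: for every family $\{X_i\}_{i\in J}$, the canonical comparison morphism $\coprod_i X_i \to \prod_i X_i$ is a pure monomorphism. To see this, note that compact objects corepresent coproducts, so the restricted Yoneda functor identifies this comparison with the natural map $\coprod_i \wh{X_i} \to \prod_i \wh{X_i}$ in the Grothendieck abelian category $\Mod(\scT^\rmc)$. The latter is pointwise the inclusion of a direct sum into a product of abelian groups, hence a monomorphism. By the definition of pure monomorphism recalled in \Cref{sec:endofinite}, this is exactly what is needed.

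Next, invoking pure-semisimplicity, the object $\coprod_i X_i$ lies in $\scT$ and is therefore pure-injective. Every pure monomorphism out of a pure-injective object splits, so $\coprod_i X_i$ is a direct summand of $\prod_i X_i$. Now let $\scC$ be any colocalizing subcategory of $\scT$ and suppose $X_i\in\scC$ for all $i\in J$. Then $\prod_i X_i\in\scC$ by closure under products, and since $\coprod_i X_i$ is a summand of $\prod_i X_i$, closure under summands gives $\coprod_i X_i\in\scC$. Combined with closure under triangles, suspensions and summands, this shows that $\scC$ is a localizing subcategory. By hypothesis $\scC$ is either $0$ or $\scT$, as required.

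There is no serious obstacle here; the only point requiring any care is the verification that the canonical map from a coproduct to a product is always a pure monomorphism, but this is a direct consequence of how the restricted Yoneda functor interacts with (co)products in a compactly generated triangulated category.
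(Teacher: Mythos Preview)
Your argument is correct but proceeds along a genuinely different line than the paper's. The paper argues that every non-zero object $X$ already cogenerates $\scT$: since ${}^{\perp}\coloc(X)$ is localizing and cannot equal $\scT$, it must vanish, so the suspensions of $X$ form a cogenerating set of pure-injective objects, and perfect cogeneration (\Cref{rec:perfect-cogen}) then gives $\coloc(X)=\scT$. Your approach instead shows that every colocalizing subcategory is automatically localizing, by observing that the canonical map $\coprod_i X_i \to \prod_i X_i$ is a pure monomorphism which splits by pure-semisimplicity, so coproducts are retracts of products. Your route is pleasantly self-contained and bypasses the perfect-cogeneration machinery; the paper's route, on the other hand, yields the slightly stronger intermediate statement that $\scT$ is cogenerated by any single non-zero object, and fits the pattern used elsewhere in the paper (cf.\ \Cref{prop:sing-cogen}). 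Both arguments use pure-semisimplicity at exactly one point, but for different purposes: you use it to split $\coprod\to\prod$, while the paper uses it to guarantee that the cogenerating object is pure-injective.
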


\begin{proof}
Let $X$ be a non-zero object of $\scT$. Then $^\perp \coloc(X)$ is either $0$ or $\scT$. The latter is false, since in that case $X$ would have to be $0$. So, ${^\perp\{\gS^n X \mid n\in \bbZ\}}={^\perp \coloc(X)}=0$. This means that the set of suspensions of $X$ is a cogenerating set of $\scT$. Since $\scT$ is pure-semisimple, $X$ is pure-injective. Consequently, by~\Cref{rec:perfect-cogen}, $\scT=\coloc(X)$. This shows that $\scT$ is cogenerated by any of its non-zero objects. As a result, the only colocalizing subcategories of $\scT$ are $0$ and $\scT$.
\end{proof}

\begin{prop}\label{prop:art-min-hyper}%
Let $R$ be an artinian hypersurface ring with unique maximal ideal $\mfm$. Then $[g_\mfm,\sginj R]$ $($resp.~$[g_\mfm,\rmS(R)]$$)$ is a minimal colocalizing subcategory of $\sginj R$ $($resp.~$\rmS(R)$$)$.
\end{prop}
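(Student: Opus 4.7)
The plan is to compute $g_\mfm$ explicitly in the artinian case, reducing the statement to the assertion that $\rmS(R)$ itself has no proper nonzero colocalizing subcategory, and then to deduce the latter from Stevenson's classification of localizing subcategories combined with pure-semisimplicity of $\rmS(R)$ via Lemma~\ref{lem:ps-cg-coloc}.

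Since $R$ is artinian, $\mfm$ is nilpotent, so for any generators $f_1,\dots,f_n$ of $\mfm$ each $R_{f_i}$ vanishes. As in the $\dim R=0$ base case of the proof of Lemma~\ref{lem:hom-koszul}, this gives $K_\infty(\mfm)\cong R$ and hence $g_\mfm=K_\infty(\mfm)\ot_R R_\mfm \cong R$. Consequently $[g_\mfm,-]\cong \Id_{\rmS(R)}$, so $[g_\mfm,\rmS(R)]=\rmS(R)$, and under the equivalence of Recollection~\ref{rec:ginj-sing-equiv} also $[g_\mfm,\sginj R]=\sginj R$. If $R$ is regular then $\rmS(R)=0$ and there is nothing to prove; otherwise the problem reduces to showing that the only colocalizing subcategories of $\rmS(R)$ are $0$ and $\rmS(R)$.

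By Lemma~\ref{lem:ps-cg-coloc}, this reduces in turn to showing (a) that the only localizing subcategories of $\rmS(R)$ are $0$ and $\rmS(R)$, and (b) that $\rmS(R)$ is pure-semisimple. For (a), I would appeal to Stevenson's classification~\cite[Theorem 6.13]{Stevenson14}: for a locally hypersurface ring the localizing subcategories of $\rmS(R)$ are in bijection with the subsets of $\Sing R$; in the nonregular artinian case one has $\Sing R=\{\mfm\}$, whose only subsets are $\varnothing$ and $\{\mfm\}$, corresponding exactly to $0$ and $\rmS(R)$.

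For (b), the main technical step, I would observe that an artinian local ring is already complete, so an artinian hypersurface ring has the form $R\cong Q/(\pi^n)$ with $Q$ a DVR and $\pi$ a uniformizer of $Q$. Since $\dim R=0$, every finitely generated $R$-module is maximal Cohen--Macaulay, and by the structure theorem over a PID such modules decompose as direct sums of the $n$ cyclic modules $Q/(\pi^m)$ for $1\le m\le n$. Thus the compact subcategory of $\rmS(R)$, equivalent up to idempotent completion to $\rmD_{\mathrm{Sg}}(R)\simeq \smodu R$, is Krull--Schmidt with finitely many indecomposables up to suspension, each endofinite by the Tate-cohomology calculation of Proposition~\ref{prop:residue field endofinite} with the residue field replaced by any finitely generated $R$-module. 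A compactly generated triangulated category with this property is pure-semisimple, and applying Lemma~\ref{lem:ps-cg-coloc} then concludes the argument. The main obstacle is the last implication in (b), namely transferring pure-semisimplicity from the module category of the Artin endomorphism algebra of a representative set of indecomposable compacts to the triangulated category $\rmS(R)$ itself via the restricted Yoneda functor.
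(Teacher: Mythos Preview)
Your approach matches the paper's: identify $[g_\mfm,\rmS(R)]$ with $\rmS(R)$ and then apply Lemma~\ref{lem:ps-cg-coloc}. The only difference lies in how the two hypotheses of that lemma are supplied. The paper does not reprove either one; it simply refers to the proof of \cite[Lemma~6.8]{Stevenson14}, where it is already shown that for an artinian hypersurface one has $\sMod R=\sginj R$, that this category is pure-semisimple, and that its only localizing subcategories are $0$ and $\sMod R$. So your ``main obstacle'' is dissolved by citation rather than argument.

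If you prefer to finish your direct route for (b), the missing ingredient is that $R\cong Q/(\pi^n)$ has finite representation type, so by Auslander's theorem every $R$-module (not only the finitely generated ones) decomposes as a direct sum of the indecomposables $Q/(\pi^m)$. Grouping summands by isomorphism class writes any object of $\sMod R$ as a finite direct sum of objects of the form $X_i^{(I_i)}$ with each $X_i$ indecomposable endofinite; arbitrary coproducts of copies of an endofinite object are again endofinite, hence pure-injective, and a finite direct sum of pure-injectives is pure-injective. This yields pure-semisimplicity without passing through the restricted Yoneda functor. For (a), invoking \cite[Theorem~6.13]{Stevenson14} is correct but heavier than necessary, since that classification is itself proved by reduction to the artinian base case handled in \cite[Lemma~6.8]{Stevenson14}.
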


\begin{proof}
Every $R$-module is Gorenstein-injective, i.e., $\sMod R=\sginj R$ and further, $\sMod R$ is a pure-semisimple compactly generated triangulated category; see the explanations in the proof of~\cite[Lemma 6.8]{Stevenson14}. It follows by~\Cref{lem:ps-cg-coloc} that the only colocalizing subcategories of $\sMod R$ are $0$ and $\sMod R$. Hence, $[g_\mfm,\sginj R]=\sMod R=\sginj R$ is a minimal colocalizing subcategory of $\sginj R$. By the equivalence $\rmS(R)\simeq \sginj R$, it also holds that $[g_\mfm,\rmS(R)]=\rmS(R)$ is a minimal colocalizing subcategory of $\rmS(R)$.
\end{proof}

\begin{prop}\label{prop:min-hyper}%
Let $R=(R,\mfm,k)$ be a hypersurface ring. Then $[g_\mfm,\rmS(R)]$ is a minimal colocalizing subcategory of $\rmS(R)$.
\end{prop}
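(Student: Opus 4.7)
The plan is to induct on the Krull dimension $d$ of $R$, using the auxiliary result \Cref{lem:image-under-hom} to pass from $R$ to $R/(x)$ for a regular element $x \in \mfm$, and then to transport colocalizing subcategories back along the restriction functor.

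The base case $d=0$ is exactly \Cref{prop:art-min-hyper}. For the inductive step, suppose $d > 0$ and the result holds for all hypersurface rings of dimension $< d$. By prime avoidance (since $\dim R > 0$, $\mfm$ is not contained in the union of the minimal primes), I can choose a regular element $x \in \mfm$. Then $R/(x)$ is a hypersurface ring of dimension $d-1$ with maximal ideal $\mfm/(x)$ and residue field $k$. Given any non-zero object $G \in [g_\mfm, \rmS(R)]$, \Cref{lem:image-under-hom} produces a non-zero object $\wt{M} = \Hom_R(R/(x), G) \in [g_{\mfm/(x)}, \rmS(R/(x))]$. By the inductive hypothesis applied to $R/(x)$ together with \Cref{prop:stalk-maximal}, I obtain
\[
\coloc_{\rmS(R/(x))}(\wt{M}) = [g_{\mfm/(x)},\rmS(R/(x))] = \coloc_{\rmS(R/(x))}(I_\gl Q_\rho^{R/(x)}(k)),
\]
so in particular $I_\gl Q_\rho^{R/(x)}(k) \in \coloc_{\rmS(R/(x))}(\wt{M})$.

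Next I push this back along $\res\colon \rmS(R/(x)) \to \rmS(R)$. Since $\res$ is right adjoint to $R/(x) \ot_R -$, it preserves products, so \Cref{lem:image-of-coloc} applies and
\[
I_\gl Q_\rho^R(k) \cong \res I_\gl Q_\rho^{R/(x)}(k) \in \res\coloc_{\rmS(R/(x))}(\wt{M}) \subseteq \coloc_{\rmS(R)}(\res \wt{M}) = \coloc_{\rmS(R)}\bigl(\Hom_R(R/(x),G)\bigr),
\]
where the first isomorphism is the commutativity of stabilization with restriction already used in the proof of \Cref{lem:hom-koszul}. Now $R/(x)$ is the perfect complex $(R \xrightarrow{x} R) \in \rmD(R)$, so by \Cref{cor:rel int-hom is hom} we have $\Hom_R(R/(x),G) \cong [R/(x), G]$, which by \Cref{lem:all-are-hom-submodules} lies in $\coloc(G)$. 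Combining these inclusions yields $I_\gl Q_\rho^R(k) \in \coloc(G)$, and hence by \Cref{prop:stalk-maximal}
\[
[g_\mfm, \rmS(R)] = \coloc(I_\gl Q_\rho^R(k)) \subseteq \coloc(G) \subseteq [g_\mfm, \rmS(R)],
\]
the last inclusion holding because $[g_\mfm,\rmS(R)]$ is a colocalizing subcategory containing $G$. Thus $\coloc(G) = [g_\mfm, \rmS(R)]$ for every non-zero $G$, which is the desired minimality.

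The main obstacle is the transfer step from $R/(x)$ back to $R$: one needs $\res$ to preserve colocalizing containment (handled by \Cref{lem:image-of-coloc} once one observes $\res$ is a right adjoint) and, crucially, one needs the image of $G$ under the fundamental construction $\Hom_R(R/(x),-)$ to simultaneously (i) remain in $\coloc(G)$ inside $\rmS(R)$, and (ii) realize the object $\wt{M}$ whose colocalizing hull in $\rmS(R/(x))$ is known inductively. This dual role of $\Hom_R(R/(x),-)$ --- as the relative internal-hom $[R/(x),-]$ on $\rmS(R)$ and as the right adjoint to extension of scalars --- is what makes the induction close.
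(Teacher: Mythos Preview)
Your overall architecture matches the paper's proof exactly: induct on $\dim R$, with \Cref{prop:art-min-hyper} as the base case, and in the inductive step pass to $R/(x)$ via \Cref{lem:image-under-hom}, apply the inductive hypothesis together with \Cref{prop:stalk-maximal}, and push back along $\res$ using \Cref{lem:image-of-coloc}. There is, however, one genuine gap.

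You obtain a regular element $x\in\mfm$ by prime avoidance and then assert that $R/(x)$ is a hypersurface ring. This is not automatic: for an arbitrary $R$-regular element $x$ the quotient $R/(x)$ need not be a hypersurface. For example, if $R=k[[u,v]]/(u^2)$ then $x=v^2$ is $R$-regular, but $R/(v^2)\cong k[[u,v]]/(u^2,v^2)$ is artinian of embedding dimension $2$, hence a codimension-$2$ complete intersection and not a hypersurface. The paper is careful here: it only claims that \emph{there exists} a regular $x\in\mfm$ with $R/(x)$ a hypersurface of dimension $d-1$, deferring to the argument in \cite[Theorem~6.12]{Stevenson14}. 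Concretely, writing $\widehat{R}\cong Q/(f)$ with $(Q,\mfn)$ regular local, one uses prime avoidance in $Q$ against $\mfn^2$ together with the minimal primes of $Q/(f)$ to find $y\in\mfn\setminus\mfn^2$ whose image in $R$ is regular; then $Q/(y)$ is regular and $(R/(x))\,\widehat{}\cong (Q/(y))/(f)$ is a hypersurface. Your prime-avoidance step, as written, does not secure this.

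A smaller imprecision: your invocation of \Cref{cor:rel int-hom is hom} yields $[R/(x),G]=\Hom_R\bigl((R\xrightarrow{x}R),G\bigr)$, not $\Hom_R(R/(x),G)$ with $R/(x)$ the module in degree~$0$ (which is how $\wt{M}$ is defined in \Cref{lem:image-under-hom}). The two do agree in $\rmS(R)$, since applying $\Hom_R(-,G^n)$ to $0\to R\xrightarrow{x}R\to R/(x)\to 0$ gives a degreewise-split short exact sequence $0\to\Hom_R(R/(x),G)\to G\xrightarrow{x}G\to 0$, exhibiting both complexes as the fiber of $x\colon G\to G$; but \Cref{cor:rel int-hom is hom} alone does not say this. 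The paper sidesteps the issue by simply citing \Cref{lem:all-are-hom-submodules} (and \Cref{rec:ginj-sing-equiv}) for $M\in\coloc(G)$.
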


\begin{proof}
If $\dim R=0$, then $R$ is an artinian hypersurface and the claim holds by~\Cref{prop:art-min-hyper}. Now suppose that $\dim R=d>0$ and that the claim holds for all hypersurface rings of dimension strictly less than $d$. There exists a regular element $x\in R$ such that $R/(x)$ is a hypersurface and $\dim R/(x)=d-1$; see the explanations in the proof of~\cite[Theorem 6.12]{Stevenson14}. Let $G$ be a non-zero object of $[g_\mfm,\rmS(R) ]$ and set $M=\Hom_R(R/(x),G)$ and denote by $\wt{M}$ the complex $M$ viewed as a complex of $R/(x)$-modules. By~\Cref{lem:image-under-hom}, it holds that $\wt{M}\in [g_{\mfm/(x)},\rmS(R/(x))]$. Further, $\res \wt{M}=M\in \coloc(G)$. The last assertion holds because $\coloc(G)$ is a $\hom$-submodule; see~\Cref{lem:all-are-hom-submodules} and~\Cref{rec:ginj-sing-equiv}. By the inductive hypothesis, $[g_{\mfm/(x)},\rmS(R/(x))]$ is minimal and by~\Cref{lem:image-under-hom}, $\wt{M}\neq 0$. Hence, $\coloc(\wt{M})=[g_{\mfm/(x)},\rmS(R/(x))]=\coloc(I_\gl Q_\rho^{R/(x)}(k))$, with the last equality by~\Cref{prop:stalk-maximal}. Since $\res$ is a product-preserving triangulated functor, it follows by~\Cref{lem:image-of-coloc} that $\res \coloc(\wt{M})\subseteq \coloc(\res \wt{M})=\coloc(M)$. The latter is contained in $\coloc(G)$. Consequently, $\res \coloc(I_\gl Q_\rho^{R/(x)}(k))\subseteq \coloc(G)$ and so $I_\gl Q_\rho^R(k)=\res I_\gl Q_\rho^{R/(x)}(k)\in \coloc(G)$. So, $\coloc(I_\gl Q_\rho^R(k))\subseteq \coloc(G)$. We infer that $\coloc(G)=\coloc(I_\gl Q_\rho^R(k))=[g_\mfm,\rmS(R)]$ (with the last equality by~\Cref{prop:stalk-maximal}) and so $[g_\mfm,\rmS(R)]$ is a minimal colocalizing subcategory of $\rmS(R)$.
\end{proof}

\begin{thm}\label{thm:hyper-costrat}%
Let $R$ be a locally hypersurface ring. Then $\rmS(R)$ is costratified.
\end{thm}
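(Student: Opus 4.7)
The plan is to combine \Cref{prop:min-hyper} with the reduction result \cite[Theorem 5.4]{Verasdanis23a} cited in the paragraph immediately preceding the theorem. By the equivalent characterisation of costratification as the conjunction of the colocal-to-global principle and cominimality (the criterion recalled after \Cref{defn:coltg-comin}), it suffices to establish both conditions for $\rmS(R)$, and the reduction theorem packages precisely these properties in terms of the smashing pieces $[g_\mfp, \rmS(R)]$ for $\mfp \in \Sing R$.

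First I would verify cominimality by localising at each prime. Fix $\mfp \in \Sing R$; since $R$ is locally hypersurface, $R_\mfp$ is a hypersurface ring, so \Cref{prop:min-hyper} gives that $[g_{\mfp R_\mfp}, \rmS(R_\mfp)]$ is a minimal colocalizing subcategory of $\rmS(R_\mfp)$. To transfer this to $\rmS(R)$, I would exploit the restriction--extension adjunction $R_\mfp \ot_R - \dashv \res$ from \Cref{rem:res-preserves-pinj}: since $g_\mfp$ is the restriction of $g_{\mfp R_\mfp}$, an internal-hom-swap gives $\Hom_R(g_\mfp, -) \cong \res \Hom_{R_\mfp}(g_{\mfp R_\mfp}, R_\mfp \ot_R -)$. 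Combined with \Cref{prop:stalk-maximal} and the identification $\res I_\gl Q_\rho^{R_\mfp}(k(\mfp)) \cong I_\gl Q_\rho^R(k(\mfp))$, this shows that $[g_\mfp, \rmS(R)] = \coloc(I_\gl Q_\rho(k(\mfp)))$, generated by a single pure-injective object courtesy of \Cref{prop:residue-pinj}. Minimality in $\rmS(R_\mfp)$ then forces minimality of $\coloc(\Hom_R(g_\mfp, I))$ in $\rmS(R)$, which is cominimality at $\mfp$.

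The colocal-to-global principle should then follow from the reduction theorem in \cite{Verasdanis23a}, built on the fact that the Balmer--Favi idempotents $\{g_\mfp\}_{\mfp \in \Sing R}$ detect membership in colocalizing subcategories via the underlying stratification of $\rmD(R)$ acting on $\rmS(R)$. The main obstacle I anticipate is the bookkeeping in the transfer across $\res$: one must check that minimality really does descend, namely that a non-zero object of $[g_\mfp, \rmS(R)]$ cogenerates the whole piece, which comes down to $\res$ being appropriately conservative on the smashing summand supported at $\mfp$ and compatible with the generators identified by \Cref{prop:stalk-maximal}. Once this translation is in place, cominimality and the colocal-to-global principle are both in hand, and costratification follows by \cite[Theorem 3.22]{Verasdanis23a}.
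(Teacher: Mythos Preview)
Your plan is essentially the paper's approach: reduce to the colocal-to-global principle plus cominimality, establish the latter at each $\mfp\in\Sing R$ by passing to the hypersurface localisation $R_\mfp$ and invoking \Cref{prop:min-hyper}, then conclude via \cite[Theorem 3.22]{Verasdanis23a}. The paper resolves the two places where you hedge more cleanly than your sketch suggests. For the colocal-to-global principle it does not appeal to a reduction theorem at all, but simply cites \cite[Proposition 3.27]{Verasdanis23a}: since $\rmD(R)$ satisfies the local-to-global principle, $\rmS(R)$ automatically satisfies the colocal-to-global principle. For the cominimality transfer---your ``main obstacle''---the paper avoids any hands-on computation of $[g_\mfp,\rmS(R)]$ or appeal to pure-injectivity of $I_\gl Q_\rho(k(\mfp))$: it observes that $\scM_\mfp=\Ker(R_\mfp\ot_R-)$ is smashing with $\rmS(R)/\scM_\mfp\simeq\rmS(R_\mfp)$, checks that $[g_\mfp,I_{\rmS(R)}]\in\scM_\mfp^{\perp}$, and then invokes \cite[Proposition 5.3]{Verasdanis23a} to pull cominimality at the closed point of $\Spec R_\mfp$ back to cominimality at $\mfp$ in $\rmS(R)$. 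So your bookkeeping worries are already packaged in that proposition; the explicit generator description you outline is correct but unnecessary for the proof.
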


\begin{proof}
Since $\rmD(R)$ satisfies the local-to-global principle, $\rmS(R)$ satisfies the colocal-to-global principle; see~\cite[Proposition 3.27]{Verasdanis23a}. Let $\mfp\in \Sing (R)$ and set $\scM_\mfp=\Ker(R_\mfp \ot_R -\colon \rmS(R) \to \rmS(R))$. Then $\scM_\mfp$ is a smashing subcategory of $\rmS(R)$ and $\rmS(R_\mfp)\simeq \rmS(R)/\scM_\mfp$ and ${\scM_\mfp}^\perp=\im(R_\mfp \ot_R -)=\im [R_\mfp,-]$. So, $[g_\mfp,I_{\rmS(R)}]=[K_\infty(\mfp)\ot_R R_\mfp,I_{\rmS(R)}]\cong[R_\mfp,[K_\infty(\mfp),I_{\rmS(R)}]]\in {\scM_\mfp}^\perp$. Since $R_\mfp$ is a hypersurface, by~\Cref{prop:min-hyper}, $\rmS(R_\mfp)$ satisfies cominimality at the unique closed point of $\Spec R_\mfp$. By~\cite[Proposition 5.3]{Verasdanis23a}, $\rmS(R)$ satisfies cominimality at $\mfp$. Hence, $\rmS(R)$ is costratified; see~\cite[Theorem 3.22]{Verasdanis23a}.
\end{proof}

A proper colocalizing subcategory $\scC$ of $\rmS(R)$ is called $\shom$-\emph{prime} if, for all $X\in \rmD(R)$ and $A\in \rmS(R)$, if $[X,A]\in \scC$, then $[X,I_{\rmS(R)}]\in \scC$ or $A\in \scC$. 
\begin{thm}\label{thm:hom-primes}%
Let $R$ be a locally hypersurface ring. Then there is a bijective correspondence between points of $\Sing R$ and $\shom$-prime colocalizing subcategories of $\rmS(R)$. A point $\mfp \in \Sing R$ is associated with $\Ker (\Hom_R(g_\mfp,-)\colon \rmS(R) \to \rmS(R))$, with the latter being equal to $\coloc(\Hom_R(g_\mfq,I_{\rmS(R)}) \mid \mfq \neq \mfp)$.
\end{thm}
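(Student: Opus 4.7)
The plan is to leverage costratification of $\rmS(R)$ (\Cref{thm:hyper-costrat}) to reduce the classification of $\shom$-prime colocalizing subcategories to a combinatorial problem on subsets of $\Sing R$, through the bijection $\tau\colon \scP(\Sing R) \to \Coloc(\rmS(R))$ inverse to $\sigma$. The goal is to identify $\shom$-primes with subsets of the form $\Sing R \setminus \{\mfp\}$, which via $\tau$ are the subcategories $\Ker([g_\mfp,-])$.

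The first step is to sharpen \Cref{lem:cosupport-properties}(e) to the equality
\[
\Cosupp([X,A]) = \Supp(X) \cap \Cosupp(A),
\]
for all $X \in \rmD(R)$ and $A \in \rmS(R)$. For a point $\mfp \in \Supp(X) \cap \Cosupp(A)$, stratification of $\rmD(R)$ gives $\loc(X \ot_R g_\mfp) = \loc(g_\mfp)$, so $g_\mfp \in \loc(X \ot_R g_\mfp)$. Applying the contravariant product-preserving triangulated functor $[-,A]\colon \rmD(R)^\rmop \to \rmS(R)$ (the opposite-category analogue of \Cref{lem:image-of-coloc}) places $[g_\mfp,A]$ inside $\coloc([X \ot_R g_\mfp, A])$; since $[g_\mfp,A] \neq 0$ by assumption and $[X \ot_R g_\mfp, A] \cong [g_\mfp,[X,A]]$ by adjunction, one obtains $\mfp \in \Cosupp([X,A])$.

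With this in hand, $\Ker([g_\mfp,-]) = \tau(\Sing R \setminus \{\mfp\})$ is immediate from the definition of cosupport, and $\shom$-primality follows: if $[X,A] \in \Ker([g_\mfp,-])$, then $\mfp \notin \Supp(X) \cap \Cosupp(A)$, so in the case $\mfp \notin \Supp(X)$ one has $\mfp \notin \Supp(X) \cap \Sing R = \Cosupp([X,I_{\rmS(R)}])$, i.e., $[X,I_{\rmS(R)}] \in \Ker([g_\mfp,-])$, while the case $\mfp \notin \Cosupp(A)$ gives $A \in \Ker([g_\mfp,-])$. For the converse, let $\scC = \tau(W)$ be an arbitrary $\shom$-prime with $W \subsetneq \Sing R$. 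If there were two distinct points $\mfp, \mfq \in \Sing R \setminus W$, the pair $X = g_\mfp$, $A = [g_\mfq,I_{\rmS(R)}]$ would give $[X,A] = [g_\mfp \ot_R g_\mfq, I_{\rmS(R)}] = 0 \in \scC$ while $\Cosupp([X,I_{\rmS(R)}]) = \{\mfp\}$ and $\Cosupp(A) = \{\mfq\}$ both fail to be contained in $W$, contradicting primality; hence $\Sing R \setminus W$ is a singleton $\{\mfp\}$ and $\scC = \Ker([g_\mfp,-])$.

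The final identification $\Ker([g_\mfp,-]) = \coloc([g_\mfq, I_{\rmS(R)}] \mid \mfq \neq \mfp)$ then follows in two short moves from costratification: the inclusion $\supseteq$ is immediate from $\Cosupp([g_\mfq,I_{\rmS(R)}]) = \{\mfq\}$, and $\subseteq$ uses the colocal-to-global principle to write $\coloc(A) = \coloc([g_\mfq,A] \mid \mfq \neq \mfp)$ for $A \in \Ker([g_\mfp,-])$, and then cominimality to replace each nonzero $[g_\mfq,A]$ by $[g_\mfq,I_{\rmS(R)}]$ inside the minimal stratum $\tau(\{\mfq\})$. The main obstacle will be the cosupport equality in the first step; once it is available, everything else is combinatorics on $\Sing R$ combined with the costratification toolkit from~\cite{Verasdanis23a}.
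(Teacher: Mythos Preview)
Your argument is correct and self-contained, but it takes a different route from the paper. The paper's proof is a three-line application of an external black box: once $\rmS(R)$ is costratified (\Cref{thm:hyper-costrat}), every colocalizing subcategory is a $\shom$-submodule (\Cref{lem:all-are-hom-submodules}), and $\Sing R=\Cosupp(I_{\rmS(R)})$, one simply invokes~\cite[Theorem 4.10]{Verasdanis23a}, which records the bijection between points of the cosupport of the cogenerator and $\shom$-prime colocalizing submodules in any costratified setting, together with the two descriptions of the associated primes.

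What you do instead is essentially reprove that external theorem in situ. The key technical input you add is the sharpening of \Cref{lem:cosupport-properties}(e) to an equality, which you obtain from stratification of $\rmD(R)$ via the contravariant functor $[-,A]$; this is exactly the ``half-tensor'' formula that underlies~\cite[Theorem 4.10]{Verasdanis23a} in general. After that, your combinatorial identification of $\shom$-primes with complements of singletons, and your two-step verification of the equality $\Ker([g_\mfp,-])=\coloc([g_\mfq,I_{\rmS(R)}]\mid \mfq\neq\mfp)$ via the colocal-to-global principle and cominimality, are precisely the ingredients packaged into that reference. Your approach has the virtue of being transparent and independent of~\cite{Verasdanis23a} beyond the costratification machinery already used for \Cref{thm:hyper-costrat}; the paper's approach is more economical but pushes the content into the cited result.
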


\begin{proof}
According to~\Cref{thm:hyper-costrat}, $\rmS(R)$ is costratified. By~\Cref{lem:all-are-hom-submodules}, every colocalizing subcategory of $\rmS(R)$ is a $\shom$-submodule. Further, $\Sing R=\Cosupp(I_{\rmS(R)})$. The claim now follows by applying~\cite[Theorem 4.10]{Verasdanis23a}.
\end{proof}

\begin{ex}\label{ex:dual numbers}%
Let $k$ be a field. The ring $R=k[x]/(x^2)$ is a hypersurface ring with unique maximal ideal $\mfm=(x)/(x^2)$, which is also the unique prime ideal and so $\Spec R=\{\mfm\}$ and $\dim R=0$. Since $R$ is not a regular ring, it follows that $\Sing R = \Spec R$. By~\Cref{thm:hyper-costrat}, it follows that $\rmS(R)$ is costratified and the colocalizing subcategories of $\rmS(R)$ stand in bijection with $\scP(\Sing R)=\{\varnothing,\Sing R\}$ and so $\Coloc (\rmS(R))=\{0,\rmS(R)\}$. One could of course obtain this conclusion by invoking~\Cref{lem:ps-cg-coloc}, since $R$ is a hypersurface ring of Krull dimension zero, so $\rmS(R)$ is pure-semisimple and the only localizing subcategories of $\rmS(R)$ are $0$ and $\rmS(R)$. The unique $\shom$-prime colocalizing subcategory of $\rmS(R)$ is $0$ and corresponds to the unique point $\mfm$ of $\Spec R$.
\end{ex}

\section{Schemes with hypersurface singularities}
\label{sec:schemes}%
In this section, we generalize~\Cref{thm:hyper-costrat} to schemes with hypersurface singularities by applying~\cite[Theorem 5.9]{Verasdanis23a}.

Let $X$ be a noetherian separated scheme with structure sheaf $\mcO_X$. We denote by $\QCoh X$ the abelian category of quasi-coherent $\mcO_X$-modules and by $\rmD(X)$ the derived category of $\QCoh X$. The derived category $\rmD(X)$ is a rigidly-compactly generated tensor-triangulated category with tensor product the left derived tensor product of complexes of $\mcO_X$-modules and unit $\mcO_X$ concentrated in degree zero. The subcategory of compact objects of $\rmD(X)$ is $\rmD^\mathrm{perf}(X)$ the subcategory of bounded complexes of locally free $\mcO_X$-modules up to quasi-isomorphism. There is a homeomorphism $\Spc(\rmD^{\mathrm{perf}}(X))\cong X$ and we will treat this as an equality. The \emph{singularity category} of $X$ is $\rmS(X)=\rmK_{\mathrm{ac}}(\Inj X)$ the homotopy category of acyclic complexes of injective quasi-coherent $\mcO_X$-modules, which is a compactly generated triangulated category by~\cite{Krause05}.

The results in the following discussion can be found in~\cite[Section 7]{Stevenson14}. Let $U$ be an open subset of $X$ and let $Z=X\setminus U$. Let $\rmD(X)_Z$ be the localizing subcategory of $\rmD(X)$ generated by those compact objects supported on $Z$. We denote by $\rmD(X)(U)$ the category $\rmD(X)/\rmD(X)_Z$. Then there is an equivalence $\rmD(X)(U)\simeq \rmD(U)$. There is an action of $\rmD(X)$ on $\rmS(X)$ that induces a support theory for objects of $\rmS(X)$ (and a cosupport theory; see~\cite{Verasdanis23a}). We denote by $\rmS(X)(U)$ the localizing subcategory of $\rmS(X)$ generated by those compact objects supported on $Z$. The category $\rmS(X)(U)$ is equivalent to $\rmS(U)$ and the action of $\rmD(X)$ on $\rmS(X)$ gives rise to an action of $\rmD(U)$ on $\rmS(U)$. If $\{U_i\cong \Spec R_i\}$ is an open affine cover of $X$, then the singular locus of $X$ is $\Sing X=\bigcup \Sing R_i$. 

A colocalizing subcategory $\scC$ of $\rmS(X)$ is called a \emph{colocalizing} $\shom$-\emph{submodule} if $[E,A]\in \scC,\, \forall E\in \rmD(X),\, \forall A\in \rmS(X)$, where $[E,-]\colon \rmS(X) \to \rmS(X)$ is the right adjoint of the action $E\ast -\colon \rmS(X) \to \rmS(X)$.
\begin{thm}\label{thm:costrat-schemes}%
Let $X$ be a noetherian separated scheme with hypersurface singularities. Then $\rmS(X)$ is costratified, i.e., there is a bijective correspondence between $\Sing X$ and the collection of colocalizing $\shom$-submodules of $\rmS(X)$ given by mapping a colocalizing $\shom$-submodule of $\rmS(X)$ to its cosupport.
\end{thm}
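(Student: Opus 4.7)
The plan is to apply the gluing theorem \cite[Theorem 5.9]{Verasdanis23a} to reduce costratification of $\rmS(X)$ to the affine, locally-hypersurface case handled by \Cref{thm:hyper-costrat}. Concretely, I would start by choosing an open affine cover $\{U_i\cong \Spec R_i\}_{i\in I}$ of $X$. The assumption that $X$ has hypersurface singularities means exactly that each $R_i$ is a locally hypersurface ring, so \Cref{thm:hyper-costrat} applies and $\rmS(R_i)$ is costratified for every $i$.

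Next I would invoke the identifications recalled just above the statement. For each $U_i$, writing $Z_i=X\setminus U_i$, the quotient $\rmS(X)/\rmS(X)_{Z_i}$ is equivalent to $\rmS(U_i)\simeq \rmS(R_i)$, and the action of $\rmD(X)$ on $\rmS(X)$ descends, compatibly with the analogous equivalence $\rmD(X)(U_i)\simeq \rmD(R_i)$, to the action of $\rmD(R_i)$ on $\rmS(R_i)$. Combined with the decomposition $\Sing X=\bigcup_i \Sing R_i$, this furnishes precisely the data required by \cite[Theorem 5.9]{Verasdanis23a}: an open affine cover in which each local piece is known to be costratified and whose singular loci exhaust $\Sing X$.

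With these ingredients assembled, \cite[Theorem 5.9]{Verasdanis23a} lifts costratification from the affine pieces to the whole scheme, yielding the desired classification of colocalizing $\shom$-submodules of $\rmS(X)$ by subsets of $\Sing X$ via cosupport. The $\shom$-submodule phrasing is natural in the scheme setting, since in the non-affine case one no longer has the analogue of \Cref{lem:all-are-hom-submodules} making every colocalizing subcategory automatically an $\shom$-submodule.

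The main obstacle is essentially bookkeeping rather than substance: one must verify that the hypotheses of the gluing theorem truly hold with the chosen cover, i.e.\ that the local equivalences $\rmS(X)(U_i)\simeq \rmS(R_i)$ intertwine the ambient $\rmD(X)$-action with the $\rmD(R_i)$-action in the precise sense demanded by \cite[Theorem 5.9]{Verasdanis23a}, and that cosupport is compatible with these identifications. Once this compatibility is in hand, the proof is a direct assembly of \Cref{thm:hyper-costrat} with the results recalled in the opening of the section.
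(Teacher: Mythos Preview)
Your proposal is correct and follows essentially the same approach as the paper: take an open affine cover $\{U_i\cong \Spec R_i\}$, use the identifications $\rmS(X)(U_i)\simeq \rmS(R_i)$ and the compatibility of actions to reduce to the affine case via \cite[Theorem 5.9]{Verasdanis23a}, and then invoke \Cref{thm:hyper-costrat} for each locally hypersurface $R_i$. The paper's proof is slightly terser, simply citing the discussion preceding the theorem for the required compatibilities rather than spelling them out as you do, but the structure is identical.
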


\begin{proof}
Let $X=\bigcup U_i$ be an open affine cover of $X$. Then each $U_i$ is isomorphic to $\Spec R_i$ for a commutative noetherian ring $R_i$ that is locally a hypersurface. By the above discussion, we have an action of $\rmD(U_i)=\rmD(R_i)$ on $\rmS(U_i)=\rmS(R_i)$ and since $R_i$ is locally a hypersurface, $\rmS(R_i)$ is costratified by~\Cref{thm:hyper-costrat}. A direct application of~\cite[Theorem 5.9]{Verasdanis23a} implies that $\rmS(X)$ is costratified.
\end{proof}

A colocalizing $\shom$-submodule $\scC$ of $\rmS(X)$ is called $\shom$-prime if, for all $E\in \rmD(X)$ and $A\in \rmS(X)$, if $[E,A]\in \scC$, then $[E,I_{\rmS(X)}]\in \scC$ or $A\in \scC$.
\begin{thm}\label{thm:schemes-hom-primes}%
Let $X$ be a noetherian separated scheme with hypersurface singularities. Then there is a bijective correspondence between points of $\Sing X$ and $\shom$-prime colocalizing submodules of $\rmS(X)$. A point $x \in \Sing X$ is associated with $\Ker ([g_x,-]\colon \rmS(X) \to \rmS(X))$, with the latter being equal to $\coloc([g_y,I_{\rmS(X)}] \mid y \neq x)$.
\end{thm}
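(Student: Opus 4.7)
The plan is to mirror the argument used for \Cref{thm:hom-primes} but in the relative setting of the action of $\rmD(X)$ on $\rmS(X)$. The key inputs are already in place: by \Cref{thm:costrat-schemes}, $\rmS(X)$ is costratified, and by construction every colocalizing $\shom$-submodule of $\rmS(X)$ is, tautologically, a $\shom$-submodule. Thus the abstract bijection between $\shom$-prime colocalizing submodules and points of the cosupport of an injective cogenerator, as formulated in~\cite[Theorem 4.10]{Verasdanis23a}, will apply once we identify that cosupport.

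The first step is therefore to verify that $\Cosupp(I_{\rmS(X)})=\Sing X$. One direction, $\Cosupp(I_{\rmS(X)})\subseteq\Sing X$, follows from the fact (used in the affine case and carried over to schemes via the discussion preceding \Cref{thm:costrat-schemes}) that the cosupport of any object of $\rmS(X)$ lies in $\Sing X$. For the reverse inclusion, because $\rmS(X)$ is costratified, the map $\sigma$ is a bijection onto $\scP(\Sing X)$, so $\sigma(\rmS(X))=\Sing X$. Since $I_{\rmS(X)}$ is a cogenerator of $\rmS(X)$, the colocalizing $\shom$-submodule $\coloc(I_{\rmS(X)})$ equals $\rmS(X)$, whence $\Cosupp(I_{\rmS(X)})=\sigma(\coloc(I_{\rmS(X)}))=\sigma(\rmS(X))=\Sing X$. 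Locally, one can also check this against the open affine cover $X=\bigcup U_i$ with $U_i\cong\Spec R_i$: the restriction to each $\rmS(R_i)$ sends the cogenerator to a cogenerator, and $\Cosupp(I_{\rmS(R_i)})=\Sing R_i$ by \cite[Proposition 5.7]{Stevenson14}, matching $\Sing X=\bigcup\Sing R_i$.

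With costratification and the cosupport computation in hand, the second step is a direct invocation of \cite[Theorem 4.10]{Verasdanis23a}, which, for any costratified category on which a rigidly-compactly generated tensor-triangulated category acts, produces a bijection between the points of the cosupport of the injective cogenerator and the $\shom$-prime colocalizing $\shom$-submodules. This bijection sends $x\in\Sing X$ to the kernel of $[g_x,-]\colon\rmS(X)\to\rmS(X)$, and this kernel coincides with $\coloc([g_y,I_{\rmS(X)}]\mid y\neq x)$ by the same theorem (the subset $\Sing X\setminus\{x\}$ corresponds under $\tau$ to exactly this colocalizing submodule).

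I do not anticipate a serious obstacle: the content of the theorem is packaged inside \cite[Theorem 4.10]{Verasdanis23a}, and all its hypotheses have been verified for the scheme setting in the body of the paper. The only minor point to be careful about is confirming that the notion of colocalizing $\shom$-submodule used here agrees with the one required in~\cite{Verasdanis23a} for an action of a rigidly-compactly generated tensor-triangulated category on a compactly generated triangulated category; this is immediate from the definition recalled just before \Cref{thm:costrat-schemes}.
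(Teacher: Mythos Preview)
Your proposal is correct and follows essentially the same approach as the paper: invoke \Cref{thm:costrat-schemes} to get costratification and then apply \cite[Theorem 4.10]{Verasdanis23a}. The paper's own proof is two sentences to this effect; your version simply spells out the identification $\Cosupp(I_{\rmS(X)})=\Sing X$ in more detail than the paper does, but this is a matter of exposition rather than a different argument.
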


\begin{proof}
By~\Cref{thm:costrat-schemes}, $\rmS(X)$ is costratified. The result now follows immediately from~\cite[Theorem 4.10]{Verasdanis23a}.
\end{proof}


\begin{thebibliography}{BCHS23}

\bibitem[Bal05]{Balmer05}
Paul Balmer.
\newblock The spectrum of prime ideals in tensor triangulated categories.
\newblock {\em J. Reine Angew. Math.}, 588:149--168, 2005.

\bibitem[BCHS23]{BarthelCastellanaHeardSanders23}
Tobias Barthel, Natalia Castellana, Drew Heard, Beren Sanders.
\newblock Cosupport in tensor-triangular geometry.
\newblock Preprint arXiv:2303.13480, 2023.

\bibitem[BF11]{BalmerFavi11}
Paul Balmer and Giordano Favi.
\newblock Generalized tensor idempotents and the telescope conjecture.
\newblock {\em Proc. Lond. Math. Soc. (3)}, 102(6):1161--1185, 2011.

\bibitem[BHS23]{BarthelHeardSanders23}
Tobias Barthel, Drew Heard, and Beren Sanders.
\newblock Stratification in tensor triangular geometry with applications to spectral {M}ackey functors.
\newblock {\em Camb. J. Math.}, 11(4):829--915, 2023.

\bibitem[BIK11a]{BensonIyengarKrause11a}
David~J. Benson, Srikanth~B. Iyengar, and Henning Krause.
\newblock Stratifying triangulated categories.
\newblock {\em J. Topol.}, 4(3):641--666, 2011.

\bibitem[BIK11b]{BensonIyengarKrause11b}
David~J. Benson, Srikanth~B. Iyengar, and Henning Krause.
\newblock Stratifying modular representations of finite groups.
\newblock {\em Ann. of Math. (2)}, 174(3):1643--1684, 2011.

\bibitem[BIK12]{BensonIyengarKrause12}
David~J. Benson, Srikanth~B. Iyengar, and Henning Krause.
\newblock Colocalizing subcategories and cosupport.
\newblock {\em J. Reine Angew. Math.}, 673:161--207, 2012.

\bibitem[EJ00]{EnochsJenda00}
Edgar E. Enochs, Overtoun M. G. Jenda.
\newblock Relative homological algebra, volume 30 of de Gruyter Expositions in Mathematics. Walter de Gruyter \& Co., Berlin,
2000.

\bibitem[Emm23]{Emmanouil23}
Ioannis Emmanouil.
\newblock K-flatness and orthogonality in homotopy categories.
\newblock {\em Israel J. Math.}, 255:201--230, 2023.

\bibitem[Kra00]{Krause00}
Henning Krause.
\newblock Smashing subcategories and the telescope conjecture -- an algebraic approach.
\newblock {\em Invent. Math.}, 139:99--133, 2000.

\bibitem[Kra02]{Krause02}
Henning Krause.
\newblock A Brown representability theorem via coherent functors.
\newblock {\em Topology}, 41(4):853--861, 2002.

\bibitem[Kra05]{Krause05}
Henning Krause.
\newblock The stable derived category of a Noetherian scheme.
\newblock {\em Compos. Math.}, 141(5):1128--1162, 2005.

\bibitem[Kra23]{Krause23}
Henning Krause.
\newblock Completions of triangulated categories.
\newblock Preprint arXiv:2309.01260, 2023.

\bibitem[Kra99]{Krause99}
Henning Krause.
\newblock Decomposing thick subcategories of the stable module category.
\newblock {\em Math. Ann.}, 313:95--108, 1999.

\bibitem[KR00]{KrauseReichenbach00}
Henning Krause and Ulrike Reichenbach.
\newblock Endofiniteness in stable homotopy theory.
\newblock {\em Transactions of the AMS}, 353(1):157--173, 2000.

\bibitem[Mat89]{Matsumura89}
Hideyuki Matsumura.
\newblock Commutative ring theory.
\newblock \emph{Cambridge Studies in Advanced Mathematics, Cambridge University Press}, 1989.

\bibitem[Mur07]{Murfet07}
Daniel Murfet.
\newblock The mock homotopy category of projectives and Grothendieck duality.
\newblock Ph.D. thesis, 2007.

\bibitem[Nee11]{Neeman11}
Amnon Neeman.
\newblock Colocalizing subcategories of D(R).
\newblock {\em J. Reine Angew. Math.}, 653:221--243, 2011.

\bibitem[Nee92]{Neeman92}
Amnon Neeman.
\newblock The chromatic tower for $D(R)$.
\newblock {\em Topology}, 31:519--532, 1992.

\bibitem[Nee96]{Neeman96}
Amnon Neeman.
\newblock The Grothendieck duality theorem via Bousfield's techniques and Brown representability.
\newblock {\em J. Amer. Math. Soc.}, 9(1):205--236, 1996.

\bibitem[Orl04]{Orlov04}
Dmitri Orlov.
\newblock Triangulated categories of singularities and D-branes in Landau--Ginzburg models.
\newblock {\em Tr. Mat. Inst. Steklova}, 246(3):227--248.

\bibitem[Ste13]{Stevenson13}
Greg Stevenson.
\newblock Support theory via actions of tensor triangulated categories.
\newblock {\em J. Reine Angew. Math.}, 681:219--254, 2013.

\bibitem[Ste14]{Stevenson14}
Greg Stevenson.
\newblock Subcategories of singularity categories via tensor actions.
\newblock {\em Compos. Math.}, 150:229--272, 2014.

\bibitem[Sto14]{Stovicek14}
Jan \v{S}t'ov\'{\i}\v{c}ek.
\newblock On purity and applications to coderived and singularity categories.
\newblock Preprint arXiv:1412.1615, 2014.

\bibitem[Ver23a]{Verasdanis23a}
Charalampos Verasdanis.
\newblock Costratification and actions of tensor-triangulated categories.
\newblock Preprint arXiv:2211.04139, 2023.

\bibitem[Ver23b]{Verasdanis23b}
Charalampos Verasdanis.
\newblock Stratification and the smashing spectrum.
\newblock {\em Math. Zeit.}, 305:54, 2023.
\end{thebibliography}
\end{document}